\documentclass[11pt,twoside,reqno]{amsart}
\usepackage{amssymb}
\usepackage{enumerate}
\usepackage{amsmath}
\usepackage{a4wide}
\usepackage[usenames]{color}
\usepackage[dvipsnames]{xcolor}
\usepackage{soul}
\usepackage[all]{xy}
\usepackage{array}
\usepackage{tkz-graph}

\usepackage{graphicx}
\usepackage{tabularx}
\usepackage[table]{xcolor}
\usepackage{booktabs}
\usepackage{verbatim}
\usepackage[hidelinks=true]{hyperref} 
\usepackage{listings}

\definecolor{codegreen}{rgb}{0,0.6,0}
\definecolor{codegray}{rgb}{0.5,0.5,0.5}
\definecolor{codepurple}{rgb}{0.58,0,0.82}
\definecolor{backcolour}{rgb}{0.95,0.95,0.92}

\lstdefinestyle{Pstystyle}{
    backgroundcolor=\color{backcolour},   
    commentstyle=\color{codegreen},
    keywordstyle=\color{magenta},
    numberstyle=\tiny\color{codegray},
    stringstyle=\color{codepurple},
    basicstyle=\ttfamily\footnotesize,
    breakatwhitespace=false,         
    breaklines=true,                 
    captionpos=b,                    
    keepspaces=true,                 
    numbers=left,                    
    numbersep=5pt,                  
    showspaces=false,                
    showstringspaces=false,
    showtabs=false,                  
    tabsize=2
}

\usepackage{mathtools}

\usepackage{lmodern}
\usepackage[utf8]{inputenc}
\usepackage[L7x]{fontenc} 
\usepackage{blkarray}
\definecolor{blanchedalmond}{rgb}{1.0, 0.92, 0.8}

\usepackage{array}

\usepackage[
   backend=bibtex, 
   style=numeric,
   isbn=false,
   giveninits=true,
   doi=false,
   url=false]{biblatex}
\newtheorem{theorem}{Theorem}

\newtheorem{lemma}[theorem]{Lemma}

\theoremstyle{remark}

\def\leq{\leqslant} \def\geq{\geqslant}

\graphicspath{ {C:/Users/Zygimantas/Desktop/GRAPHS/} }

\begin{document}

\title{No three algebraic conjugates of degree sixteen sum to zero}

\author{Žygimantas Baronėnas, Paulius Drungilas, and Jonas Jankauskas}

\address{Institute of Mathematics, Faculty of Mathematics and Informatics, Vilnius
University, Naugarduko 24, Vilnius LT-03225, Lithuania}
\email{zygimantas.baronenas@mif.stud.vu.lt}

\address{Institute of Mathematics, Faculty of Mathematics and Informatics, Vilnius
University, Naugarduko 24, Vilnius LT-03225, Lithuania}
\email{paulius.drungilas@mif.vu.lt}

\address{Institute of Mathematics, Faculty of Mathematics and Informatics, Vilnius
University, Naugarduko 24, Vilnius LT-03225, Lithuania}
\email{jonas.jankauskas@mif.vu.lt}

\subjclass[2020]{11R04, 11R32, 05C25, 20B25} \keywords{Algebraic numbers, linear relations in algebraic conjugates, vertex-transitive graphs}

\begin{abstract}
Let $d$ be the smallest positive integer, not divisible by $3$, for which there exists an algebraic number over $\mathbb{Q}$ of degree $d$ whose some three algebraic conjugates sum to zero. Employing the classification of vertex-transitive graphs on 16 vertices of degree 6, we prove that $d\neq 16$. 
This, combined with results obtained by Dubickas, Smyth and Stong \cite{DubickasSmyth2006}, Dubickas and Jankauskas \cite{DubickasJankauskas2015} and Virbalas \cite{Virbalas2025a}, implies that $d=20$.

\end{abstract}
\maketitle

\section{Introduction}\label{intro}

In 2004 Dubickas and Smyth \cite{DubickasSmyth2006} asked to prove or disprove the following: If $\alpha+\alpha'+\alpha''=0$ for three 
distinct algebraic conjugates of an algebraic number $\alpha$ of degree $d$, then 3 divides $d$. 
Stong (see \cite{DubickasSmyth2006}) provided a counterexample when $d=20$. More precisely, he showed that 
the irreducible polynomial
\begin{equation*}
x^{20} + 4\cdot5^{9}\cdot x^{10} + 16\cdot5^{15}
\end{equation*}
has three distinct roots, which sum to zero. 
% During the open problem session of the \emph{Number Theory and Polynomials} conference held in Bristol, UK, in 2006 (see, e.g., \cite{McKee_Smyth_2008}), Smyth asked 
Several authors (see, e.g., \cite{DubickasJankauskas2015,Virbalas2025a}) were interested in the following natural question: what is the smallest positive integer $d$, not a multiple of 3, for which there exists an algebraic number of degree $d$ such that some three of its conjugates sum to zero? The above-mentioned counterexample of Stong implies that $d\leq 20$. Dubickas and Jankauskas (see Theorem 1.2 in \cite{DubickasJankauskas2015}) showed that such a minimal value of $d$ lies in the range $10\leq d \leq 20$. Note that, by Lemma \ref{intro4} (see Section~\ref{intro2}), $d$ cannot be a prime number. 
Thus, $d=10, 14, 16$ or $20$. 
Recently, Virbalas (see Theorem 1.1 in \cite{Virbalas2025a}) showed that $d\neq 2p$, where $p\geq 5$ is a prime number. 
Thus, either $d=16$ or $d=20$. The main result of this paper states that $d=20$.

\begin{theorem}\label{t1}
Let $d$ be the smallest positive integer, not a multiple of $3$, for which there exists an algebraic number of degree $d$ whose some three algebraic conjugates sum to zero. Then $d=20$.
%Let $\alpha$ be an algebraic number over $\mathbb{Q}$ of degree $d$. Assume that $d$ is the smallest positive integer, not multiple of $3$, such that $\alpha_{i}+\alpha_{j}+\alpha_{k}=0$, for some three conjugates of $\alpha$. Then $d=20$.
\end{theorem}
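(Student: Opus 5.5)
By the discussion preceding Theorem~\ref{t1}, it suffices to show that no algebraic number of degree $16$ has three conjugates summing to zero. Stong's example gives $d\leq 20$. The results of \cite{DubickasJankauskas2015} and Lemma~\ref{intro4} leave only $d\in\{10,14,16,20\}$, and \cite{Virbalas2025a} excludes $d=10,14$. So I will assume, for contradiction, that $\alpha$ has degree $16$ with conjugates $\alpha_1,\dots,\alpha_{16}$ and a relation $\alpha_i+\alpha_j+\alpha_k=0$. Let $G$ be the Galois group of the normal closure, viewed as a transitive permutation group on the $16$ roots.

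\textbf{Step 1: encode the relations as a graph.} Let $\mathcal T$ be the set of $3$-subsets $\{i,j,k\}$ with $\alpha_i+\alpha_j+\alpha_k=0$. This set is $G$-invariant and nonempty. Build the graph $\Gamma$ on the $16$ roots, joining two roots when they lie in a common triple. Then $G$ acts on $\Gamma$ by automorphisms, so $\Gamma$ is vertex-transitive.

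I will first show that two distinct triples meet in at most one element. If $\alpha_i+\alpha_j+\alpha_k=0=\alpha_i+\alpha_j+\alpha_l$, then $\alpha_k=\alpha_l$, which is impossible for distinct roots. Hence each edge lies in exactly one triple. If each vertex lies in $t$ triples, then $\Gamma$ is $2t$-regular, and counting gives $16t=3|\mathcal T|$, so $3\mid t$.

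To reduce to degree $6$, I pick a single $G$-orbit of triples, which has $t=3$ exactly when its size is $16$. If the orbit is larger, I pass to that orbit with $t=3m$. The degree-$6$ case, $t=3$, is the one matching the classification of vertex-transitive graphs on $16$ vertices of degree $6$ mentioned in the abstract. Larger $t$ would need a separate argument, for instance bounding $t$ through linear algebra on the relation module. I would handle this with the following additional constraint: the triples span a sublattice of relations in the permutation module $\mathbb Q^{16}$. That space of relations has dimension $16-\operatorname{rank}$, and it is a $\mathbb Q G$-submodule orthogonal to $(1,\dots,1)$ whenever $\sum\alpha_i\neq0$. Using that constraint, I would first restrict to the orbit case $t=3$, so $\Gamma$ is a $6$-regular vertex-transitive graph whose edge set decomposes into triangles, each vertex lying in $3$ edge-disjoint triangles.

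\textbf{Step 2: run through the classification.} For each vertex-transitive $6$-valent graph on $16$ vertices, I will check whether it admits a $G$-invariant triangle decomposition with the properties above, for some transitive subgroup $G\le\mathrm{Aut}(\Gamma)$. Surviving candidates must then satisfy arithmetic conditions. The vector space $V$ of rational linear relations among the $\alpha_i$ contains the $16$ triple-vectors, and $\alpha$ must be realizable. Concretely, the $\mathbb Q$-span $W$ of the triple vectors must leave $\mathbb Q^{16}/W$ with a $G$-module quotient in which the image of $e_1$ has stabilizer exactly $G_1$, meaning no two roots coincide. I will compute the rank of the $16\times16$ incidence matrix $M$ of triples over $\mathbb Q$. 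If the span of the rows of $M$ forces $e_i-e_j\in V$ for some $i\neq j$, or forces $\alpha=0$, that candidate is excluded. Typically this happens when $M$ has large rank, which leaves the kernel too small to separate points.

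\textbf{Main obstacle.} The hard part is the case analysis: enumerating all pairs $(\Gamma,G)$ with transitive $G$ preserving a triangle decomposition, and then excluding each by a rank or module computation. Some candidates, such as Cayley graphs on $\mathbb Z_2^4$ or $\mathbb Z_4^2$, may carry genuine triangle systems (affine lines in $\mathbb F_2^4$ do not give triangles of size $3$, but other groups might). For these I expect to need the full $\mathbb QG$-module decomposition of the permutation character, showing that every $G$-invariant complement of $W$ fails faithfulness on points. I would do this computer-assisted, with GAP over the known list of graphs. Once every candidate is excluded, $d\neq16$, and hence $d=20$.
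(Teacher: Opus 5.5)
Your set-up (the graph on the roots, edge-disjoint triangles, vertex-transitivity, the count $16t=3|\mathcal T|$) matches the paper's, but the proposal has two genuine gaps.

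\textbf{First gap: $6$-regularity is never established.} Inside one $G$-orbit of triples you only know $3\mid t$ and $2t\le 15$, so $t\in\{3,6\}$. The case $t=6$ gives a $12$-regular graph, which the classification you invoke does not cover, and you defer it to an unspecified linear-algebra argument. The constraint you propose for it is vacuous. The relation $\alpha_i+\alpha_j+\alpha_k=0$ has coefficient sum $3\neq 0$, so the trace must vanish: this is Lemma~\ref{intro3}, or directly, summing the triple vectors of an orbit gives $t(1,\dots,1)\in W$. Hence the condition ``orthogonal to $(1,\dots,1)$ when $\sum\alpha_i\neq 0$'' never applies. The trace has to be used the other way round, as in Lemma~\ref{lemr6}. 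Six triples through $\alpha_1$ use six disjoint pairs, so adding them and subtracting the trace gives $5\alpha_1=\alpha_{14}+\alpha_{15}+\alpha_{16}$, which Lemma~\ref{lemma4} forbids. This is the missing idea: it yields $t=3$, a single orbit of $16$ triples, and degree $6$.

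\textbf{Second gap: Step~2 is a plan rather than an argument.} Not one of the $40$ graphs is actually excluded, and the outcome of the proposed GAP computation is only anticipated (``I expect'', ``typically''). Your test, that the span $W$ of the triple vectors must contain no $e_i-e_j$, is a valid necessary condition and depends only on the triangle system. A completed search over all invariant triangle decompositions could in principle work, but as written no contradiction is reached.

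The paper avoids any module computation by extracting local obstructions:
\begin{itemize}
\item each edge lies in exactly one zero-sum triangle, and a neighbour of degree $2$ in $\mathcal{G}(\alpha)$ determines its triangle (Lemma~\ref{lemzstp}(ii),(iii)); this disposes of $\mathcal{G}_{20},\dots,\mathcal{G}_{24}$;
\item Lemma~\ref{intro7} forbids two cross-joined zero-sum triangles at a vertex (Lemma~\ref{lemzstp}(iv)); this excludes Figure~\ref{fig8};
\item Lemma~\ref{lemma4} excludes the configuration of Figure~\ref{fig11};
\item the identity $3\alpha+\sum_{\beta\in N(\alpha)}\beta=0$ forbids two vertices with equal neighbourhoods (Figure~\ref{fig9}).
\end{itemize}
Together with the subgraph test for Figure~\ref{fig10}, these eliminate all $40$ candidates with only a few subgraph searches.

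You also tacitly assume the three conjugates are distinct. The paper rules out $2\alpha_1+\alpha_3=0$ by a maximal-modulus argument.
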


The relation $\alpha+\alpha'+\alpha''=0$ between three algebraic conjugates is a particular case of a more general relation
\begin{equation}\label{eqin1}
a_1\alpha_1+a_2\alpha_2+\dotsb+a_n\alpha_n=0,
\end{equation}
where $\alpha_1,\alpha_2,\dotsc,\alpha_n$ are the algebraic conjugates of an algebraic number $\alpha$ of degree $n$ and 
$a_1,a_2,\dotsc,a_n$ are rational integers, not all zero. The relation \eqref{eqin1} is called \textit{trivial} if $a_1=a_2=\dotsc=a_n$. One of the first general results was obtained by Kurbatov \cite{Kurbatov1977}, who proved that there are 
no non-trivial relations \eqref{eqin1} if the degree $n$ is a prime number (see also \cite{Baron1995} and \cite{Dixon1997}). 
Girstmair in \cite{Girstmair1999} proposed a theoretical framework to study linear relations \eqref{eqin1}, based on representation theory of finite groups, applied to the Galois group of the Galois closure of $\mathbb{Q}(\alpha)$. 

A lot of attention was devoted to the investigation of the relation \eqref{eqin1} for small values of $n$ and $a_1,a_2,\dotsc,a_n$. See, e.g., \cite{Girstmair1982, Girstmair2006, Girstmair2007, Girstmair2008, Lalande2007, Lalande2010, DubickasHareJankauskas2017,DubickasJankauskas2015} for the results related to the linear relation $\alpha_1+\alpha_2=\alpha_3$,  \cite{Baronenas2026} for the results related to the linear relations $\alpha_1=\alpha_2+\alpha_3+\alpha_4$ and $\alpha_1+\alpha_2=\alpha_3+\alpha_4$, \cite{DuVi25,Kitaoka2017} for the classification of all possible relations in case when $n=4$ and \cite{Serrano2025} for the multiplicative analog.

In the proof of Theorem~\ref{t1} (see Section~\ref{result}), we assume that there exists an algebraic number $\alpha$ of degree 16 whose three algebraic conjugates sum to zero. Then we construct the graph $\mathcal{G}$ whose set of vertices 
is the set $\{\alpha_1,\alpha_2,\dotsc,\alpha_{16}\}$ of algebraic conjugates of $\alpha$. 
Two distinct vertices $\alpha'$ and $\alpha''$ are adjacent if and only if there exists a conjugate $\alpha'''$ such that $\alpha'+\alpha''+\alpha'''=0$. We prove that the graph $\mathcal{G}$ is vertex-transitive and every vertex has degree (valency) 6. Then we use the classification of such graphs. There are exactly 40 vertex-transitive graphs on 16 vertices of degree 6 (see, e.g., McKay and Royle \cite{mckay1990transitive}; the explicit list is maintained in \cite{HoltRoyle2020}\footnote{https://doi.org/10.5281/zenodo.4010122}). 
Then, using auxiliary results (see Section~\ref{intro2}, and Lemma~\ref{lemrc} and Lemma~\ref{lemzstp} in Section~\ref{result}) and computations with SageMath \cite{sagemath} (the code is provided in the appendix),  we show that neither of these 40 graphs is isomorphic to $\mathcal{G}$.  

\section{Auxiliary results}\label{intro2}

As far as we know, the first general result, related to linear relations among algebraic conjugates of a given algebraic number, was obtained by Kurbatov \cite{Kurbatov1977}:

\begin{lemma}\label{intro4}
The equality
\[
k_{1}\alpha_{1}+k_{2}\alpha_{2}+\cdots+k_{d}\alpha_{d} = 0
\]
with conjugates $\alpha_{1}, \alpha_{2},\dots, \alpha_{d}$ of an algebraic number $\alpha$ of prime degree $d$ over $\mathbb{Q}$ and $k_{1}, k_{2},\dots, k_{d}\in\mathbb{Z}$ can only hold if  $k_{1} = k_{2} = \cdots = k_{d}$.
\end{lemma}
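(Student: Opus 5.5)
Our plan is the standard Galois-module argument: translate the relation into a statement about a rational representation of the Galois group, then use that the degree $d$ is prime.

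Let $L$ be the Galois closure of $\mathbb{Q}(\alpha)$ and $G=\mathrm{Gal}(L/\mathbb{Q})$, which acts transitively on the $d$ conjugates $\alpha_1,\dots,\alpha_d$. Let $V=\mathbb{Q}^d$ be the corresponding permutation module, with $G$ permuting the coordinates as it permutes the $\alpha_i$. Consider the $\mathbb{Q}$-linear map $\phi\colon V\to L$, $\phi(k_1,\dots,k_d)=\sum k_i\alpha_i$. It is $G$-equivariant, since applying $\sigma\in G$ to $\sum k_i\alpha_i=0$ permutes the $\alpha_i$. Hence the relation module $R=\ker\phi$ is a $\mathbb{Q}G$-submodule of $V$. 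The vector $(k_1,\dots,k_d)$ lies in $R$, and we must show that $R$ contains no vector outside the line $\mathbb{Q}(1,\dots,1)$.

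\textbf{Step 1: decompose $V$.} We claim $V=\mathbb{Q}\mathbf{1}\oplus W$, where $W=\{\sum k_i=0\}$ is $\mathbb{Q}G$-irreducible. Since $d$ is prime and $G$ is transitive, $d\mid |G|$, so by Cauchy's theorem $G$ contains an element of order $d$. As a permutation of $d$ points, this element must be a $d$-cycle $c$. Over $\mathbb{Q}$, the cyclic group $\langle c\rangle$ decomposes $V\cong\mathbb{Q}[x]/(x^d-1)=\mathbb{Q}\oplus\mathbb{Q}[x]/(\Phi_d(x))$. Because $\Phi_d$ is irreducible over $\mathbb{Q}$, the summand $W$ is already irreducible under $\langle c\rangle$, hence under $G$. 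This is the main point where primality is used. We expect it to be the crux: it needs both the existence of the $d$-cycle and the irreducibility of the cyclotomic polynomial.

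\textbf{Step 2: identify $R$.} The submodules of $V$ are therefore $0$, $\mathbb{Q}\mathbf{1}$, $W$ and $V$. Note that $\mathbb{Q}\mathbf{1}$ and $W$ are non-isomorphic, since they have dimensions $1$ and $d-1$ with $d-1\ge 2$ for $d\geq 3$; so there are no "diagonal" submodules. (For $d=2$, $G$ acts on $W$ by the sign character, which is also not isomorphic to the trivial module, and the conclusion below still holds.)

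We now rule out the cases $R\supseteq W$. If $R$ contained $W$, then all differences $\alpha_i-\alpha_j$ would vanish. But the conjugates are distinct, because the minimal polynomial is separable in characteristic $0$, and $d\geq 2$. So $R\cap W=0$, and hence $R\subseteq \mathbb{Q}\mathbf{1}$.

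It follows that $(k_1,\dots,k_d)$ is a multiple of $(1,\dots,1)$, i.e. $k_1=\dots=k_d$. (This holds whether or not the trace $\sum\alpha_i$ is $0$; when the trace is nonzero, $R=0$ and all $k_i=0$.)
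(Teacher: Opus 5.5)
Your proof is correct. Note that the paper gives no proof of this lemma: it quotes it from Kurbatov \cite{Kurbatov1977}. So your argument is a self-contained replacement for a citation rather than a variant of something in the text.

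Two steps you leave implicit both hold:
\begin{enumerate}
\item An element of order $d$ in $G$ acts as a $d$-cycle because $G$ acts faithfully on the roots, since $L$ is generated by them. Its image in $S_d$ therefore has order $d$.
\item The identification of $W$ with the $\Phi_d$-summand holds because $\Phi_d(c)=1+c+\dots+c^{d-1}$ sends $v$ to $(\sum_i v_i)\mathbf{1}$. Hence $\ker\Phi_d(c)=W$ exactly.
\end{enumerate}

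After producing the $d$-cycle $c$, you never use the rest of $G$. The modules $\mathbb{Q}\mathbf{1}$ and $W$ are already non-isomorphic simple $\langle c\rangle$-modules: $c$ acts trivially on the first and not on the second. This also covers $d=2$ without your separate remark. So the whole argument is really about the circulant system obtained by applying powers of $c$ to the relation. Written with the eigenvalues $\sum_i k_i\zeta^{ij}$ of that circulant matrix, it becomes the familiar elementary proof.

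What your version gains over the bare citation is that it isolates exactly where primality enters. Irreducibility of $\Phi_d$ forces the permutation module to have only the two constituents $\mathbb{Q}\mathbf{1}$ and $W$. This also shows why, for composite $d$ such as $16$, the representation-theoretic obstruction disappears and the paper needs its graph-theoretic case analysis instead.
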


The following result of Smyth \cite{Smyth1982} will be used several times in the proof of Theorem~\ref{t1} to eliminate impossible relations among algebraic conjugates.

\begin{lemma}\label{intro7}
If $\alpha_{1}, \alpha_{2}, \alpha_{3}$ are three conjugates of an algebraic number satisfying $\alpha_{1}\neq\alpha_{2}$ then $2\alpha_{1}\neq \alpha_{2} + \alpha_{3}$.
\end{lemma}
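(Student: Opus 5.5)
The plan is to turn the single relation into a statement about every conjugate by applying the Galois group, and then reach a contradiction by looking at the convex hull of the set of conjugates in $\mathbb{C}$.

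Suppose, to the contrary, that $2\alpha_1=\alpha_2+\alpha_3$ with $\alpha_1\neq\alpha_2$. Let $S\subset\mathbb{C}$ be the finite set of all conjugates of $\alpha_1$, and let $G$ be the Galois group of the normal closure of $\mathbb{Q}(\alpha_1)$ over $\mathbb{Q}$.

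\textbf{Step 1: every conjugate is a midpoint.} Since $G$ acts transitively on $S$, for every $\beta\in S$ there is $\sigma\in G$ with $\sigma(\alpha_1)=\beta$. Applying $\sigma$ to the relation gives
\[
2\beta=\sigma(\alpha_2)+\sigma(\alpha_3),\qquad \sigma(\alpha_2),\sigma(\alpha_3)\in S .
\]
Because $\sigma$ is injective and $\alpha_2\neq\alpha_1$, we have $\beta':=\sigma(\alpha_2)\neq\beta$. Then $\beta'':=\sigma(\alpha_3)=2\beta-\beta'$ also differs from $\beta$, and $\beta''\neq\beta'$. Hence every $\beta\in S$ is the midpoint of a segment joining two distinct points of $S$, neither of which is $\beta$.

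\textbf{Step 2: an extreme point gives a contradiction.} Choose $\theta\in\mathbb{R}$ such that the real-linear functional $z\mapsto \mathrm{Re}(e^{i\theta}z)$ takes pairwise distinct values on the finite set $S$. This is possible because only finitely many directions are excluded, one for each pair of distinct points of $S$. Let $\beta\in S$ be the unique maximizer of this functional. By Step~1, $\beta=(\beta'+\beta'')/2$ with $\beta',\beta''\in S\setminus\{\beta\}$. Then
\[
\mathrm{Re}(e^{i\theta}\beta)=\tfrac12\bigl(\mathrm{Re}(e^{i\theta}\beta')+\mathrm{Re}(e^{i\theta}\beta'')\bigr)<\mathrm{Re}(e^{i\theta}\beta),
\]
where the strict inequality holds because both terms on the right are strictly smaller than the maximum. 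This is a contradiction, which proves the lemma.

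The only point needing care is Step~1: we must check that the two points produced for $\beta$ really differ from $\beta$, which is exactly where the hypothesis $\alpha_1\neq\alpha_2$ is used. Step~2 is then the standard fact that a vertex of the convex hull of $S$ cannot be a midpoint of two other points of $S$. I do not expect a serious obstacle; the main subtlety is choosing a strictly separating direction $\theta$ so that the maximizer is unique.
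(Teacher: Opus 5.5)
Your proof is correct. The paper does not prove this lemma itself; it is quoted from Smyth \cite{Smyth1982}. The natural comparison is therefore with the maximal-modulus argument the paper uses in Section~\ref{result} to exclude $2\alpha_1+\alpha_3=0$. Adapted to the present lemma, that argument sends $\alpha_1$ to a conjugate $\beta$ of maximal modulus $m$, giving $2m=|\beta'+\beta''|\leq|\beta'|+|\beta''|\leq 2m$. The equality case of the triangle inequality then forces $\beta'=\beta''=\beta$, contradicting $\beta'\neq\beta$.

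Your Step~1 is exactly the first half of this. In Step~2 you replace the modulus by a generic real-linear functional with a unique maximizer on $S$, i.e.\ you work at an extreme point of the convex hull of $S$. The contradiction is then a plain strict inequality, with no equality-case analysis of the triangle inequality.

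The trade-off is scope. Your argument is tailored to relations expressing one conjugate as a convex combination of others. In that form it shows, more generally, that no conjugate is a convex combination, with positive rational weights, of conjugates different from it. The modulus argument works just as well with coefficients of mixed sign, as in $2\alpha_1+\alpha_3=0$, or in the case of Lemma~\ref{lemma4} where $K=\mathbb{Q}$ and the linear form vanishes.
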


The following result, obtained by Dubickas \cite{Dubickas2002} (see also \cite[Theorem~3']{Dixon1997}), is a generalization of Lemma $\ref{intro7}$. 

\begin{lemma}\label{lemma4}
If $\beta_{1}, \beta_{2},\dots , \beta_{n}$, where $n\geq 3$, are distinct algebraic numbers conjugate over a field of characteristic zero $K$ and $k_{1}, k_{2},\dots ,k_{n}$ are non-zero rational numbers satisfying $|k_{1}| \geq |k_{2}|+\dots+|k_{n}|$ then
\begin{equation*}
k_{1}\beta_{1} + k_{2}\beta_{2} +\dots+ k_{n}\beta_{n}\notin K.
\end{equation*}
\end{lemma}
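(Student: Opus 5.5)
The plan is to first reduce to a relation equal to $0$ by a purely algebraic averaging over $K$, and then to reach a contradiction by looking at a conjugate of maximal modulus after embedding into $\mathbb{C}$.

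\emph{Reduction to a zero relation.} Suppose, to the contrary, that $k_1\beta_1+\dots+k_n\beta_n=c\in K$. Let $f$ be the minimal polynomial of $\beta_1$ over $K$, of degree $m$, and let $\gamma_1,\dots,\gamma_m$ be its roots in a normal closure $L$ with group $G=\mathrm{Gal}(L/K)$. Put $\bar\beta=\frac1m(\gamma_1+\dots+\gamma_m)\in K$ and $\beta_i'=\beta_i-\bar\beta$. The $\beta_i'$ are still distinct and conjugate over $K$, and the conjugates $\gamma_j'=\gamma_j-\bar\beta$ sum to $0$. The relation becomes
\[
\sum_{i=1}^n k_i\beta_i' = c-\Bigl(\sum_i k_i\Bigr)\bar\beta=:c'\in K.
\]
Apply every $\sigma\in G$ and average over $G$. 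For each $i$, $\sigma\beta_i'$ runs over the $\gamma_j'$ equally often, so each average $\frac1{|G|}\sum_\sigma\sigma\beta_i'$ equals $\frac1m\sum_j\gamma_j'=0$. Since $c'$ is fixed by $G$, this gives $c'=0$. So it suffices to show that $\sum k_i\beta_i'=0$ is impossible.

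\emph{Passing to $\mathbb{C}$.} Let $K_0$ be the field generated over $\mathbb{Q}$ by the coefficients of $f$. It is finitely generated over $\mathbb{Q}$, and $f$ remains irreducible over $K_0\subseteq K$. The splitting field of $f$ over $K_0$ is therefore finitely generated over $\mathbb{Q}$ and embeds into $\mathbb{C}$. All the objects above (the $\gamma_j'$, the relation, and the transitive action of $\mathrm{Gal}$ on the roots) live inside this splitting field. We may also divide by the sign of $k_1$ and assume $k_1>0$.

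\emph{Extremal argument.} Choose a conjugate $\gamma$ of $\beta_1'$ with maximal modulus $R=|\gamma|$. Then $R>0$: the conjugates are distinct, so they are not all zero. Consider the real-linear functional $\phi(z)=\mathrm{Re}(\bar\gamma z)/R$. It satisfies $\phi(\gamma)=R$ and $|\phi(z)|\le |z|\le R$ for every conjugate $z$. Equality $|\phi(z)|=R$ holds only for $z=\pm\gamma$.

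Take $\sigma$ in the Galois group with $\sigma\beta_1'=\gamma$, apply it to the relation, and then apply $\phi$:
\[
0=k_1R+\sum_{i\ge2}k_i\,\phi(\sigma\beta_i')\ \ge\ k_1R-R\sum_{i\ge2}|k_i|\ \ge 0 .
\]
Equality throughout would force $|\phi(\sigma\beta_i')|=R$, and hence $\sigma\beta_i'=-\gamma$, for every $i\ge2$. These images are distinct because the $\beta_i'$ are distinct. Since $n\ge3$, there are at least two indices $i\ge2$, so they cannot all equal $-\gamma$. This is a contradiction.

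\emph{Main obstacle.} The delicate points are the constant $c$ and the general field $K$. Both are handled by centering at the trace average before any analytic step, and by embedding only a finitely generated field into $\mathbb{C}$. The hypothesis $n\ge3$ is used exactly in the strictness of the final inequality.
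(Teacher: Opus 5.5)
Your proof is correct. The paper gives no proof of this lemma; it quotes it from Dubickas \cite{Dubickas2002} and Dixon's Theorem~3'. So your argument is a self-contained replacement rather than a variant of an in-paper route.

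Its core is the extremal step. This is the same maximal-modulus device the paper uses inline to exclude $2\alpha_1+\alpha_3=0$. You sharpen it with the functional $\phi(z)=\mathrm{Re}(\bar\gamma z)/R$, which handles arbitrary weights and, in the equality case, pins every $\sigma\beta_i'$ to $\pm\gamma$. This shows clearly where both $|k_1|\ge\sum_{i\ge2}|k_i|$ and $n\ge3$ enter.

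Your two preliminary steps are what the full generality of the statement costs.
\begin{itemize}
\item Centering at $\bar\beta$ and averaging over $\mathrm{Gal}(L/K)$ removes a possible constant $c\in K$. This is genuinely needed when $\sum k_i=0$, e.g.\ for $2\beta_1-\beta_2-\beta_3$, where no translation can absorb $c$.
\item Descending to $K_0=\mathbb{Q}(\text{coefficients of }f)$ supplies an embedding into $\mathbb{C}$ when $K$ is an arbitrary field of characteristic zero.
\end{itemize}
In the two places the paper actually uses the lemma, $K=\mathbb{Q}$ and the relations equal $0$: namely $5\alpha_1-\alpha_{14}-\alpha_{15}-\alpha_{16}=0$ and $3\alpha_{k_1}-\tilde\alpha-\alpha'''-\alpha''=0$. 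There your final paragraph alone would suffice.

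One small step should be written out. From $|\phi(\sigma\beta_i')|=R$ you get $\sigma\beta_i'=\pm\gamma$, and $+\gamma$ is excluded because $\sigma\beta_i'\neq\sigma\beta_1'=\gamma$.
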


Also, we will use the following result, proved by Dubickas and Jankauskas in \cite{DubickasJankauskas2015}. 

\begin{lemma}\label{intro3}
The equality
\begin{equation*}
k_{1}\alpha_{1}+k_{2}\alpha_{2}+\cdots+k_{d}\alpha_{d} = 0
\end{equation*}
with conjugates $\alpha_{1}, \alpha_{2},\dots, \alpha_{d}$ of an algebraic number $\alpha$ of degree $d$ over $\mathbb{Q}$ and $k_{1}, k_{2},\dots, k_{d}\in\mathbb{Z}$ satisfying $\sum_{i=1}^{d} k_{i}\neq 0$ can only hold if $tr(\alpha) := \alpha_{1} + \alpha_{2} + \cdots + \alpha_{d} = 0$.
\end{lemma}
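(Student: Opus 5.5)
The proof is a Galois averaging argument: apply every automorphism of the Galois closure to the relation and add the results. Since the Galois group permutes the conjugates transitively and uniformly, the left-hand side collapses to a nonzero multiple of the trace.

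Let $L$ be the normal closure of $\mathbb{Q}(\alpha)$ over $\mathbb{Q}$, and let $G=\mathrm{Gal}(L/\mathbb{Q})$. Then $G$ acts transitively on the set $\{\alpha_1,\dots,\alpha_d\}$ of the $d$ distinct conjugates of $\alpha$. By the orbit--stabilizer theorem, for each fixed $i$ and each $j$, the number of $\sigma\in G$ with $\sigma(\alpha_i)=\alpha_j$ equals $|G|/d$. This holds because that set is a coset of the stabilizer of $\alpha_i$, which has order $|G|/d$. Consequently, for every $i$,
\[
\sum_{\sigma\in G}\sigma(\alpha_i)=\frac{|G|}{d}\,(\alpha_1+\alpha_2+\cdots+\alpha_d)=\frac{|G|}{d}\,tr(\alpha).
\]

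Next, apply each $\sigma\in G$ to the relation $k_1\alpha_1+\cdots+k_d\alpha_d=0$. Each image is again $0$, because $\sigma$ fixes the rational integers $k_i$. Summing over all $\sigma\in G$ and interchanging the two finite sums gives
\[
0=\sum_{\sigma\in G}\sum_{i=1}^{d}k_i\,\sigma(\alpha_i)=\sum_{i=1}^{d}k_i\sum_{\sigma\in G}\sigma(\alpha_i)=\frac{|G|}{d}\Bigl(\sum_{i=1}^{d}k_i\Bigr)tr(\alpha).
\]
Since $|G|/d\neq 0$ and $\sum_{i=1}^d k_i\neq 0$ by hypothesis, it follows that $tr(\alpha)=0$.

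There is no real obstacle. The only point needing care is the uniform count $|G|/d$ for how often each $\alpha_i$ is sent to each $\alpha_j$. This relies on the conjugates being distinct, which holds in characteristic zero by separability, and on the action being transitive, which holds because the minimal polynomial of $\alpha$ is irreducible.
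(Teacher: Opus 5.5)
Your proof is correct: summing the images of the relation over all of $G$ amounts to applying $\mathrm{Tr}_{L/\mathbb{Q}}$, and the uniform count $|G|/d$ then yields $\frac{|G|}{d}\bigl(\sum_{i} k_i\bigr)\,tr(\alpha)=0$. The paper gives no proof of this lemma; it quotes it from Dubickas and Jankauskas \cite{DubickasJankauskas2015}. Your Galois-averaging argument is the standard way to prove it.
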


\section{Proof of Theorem $\ref{t1}$}\label{result}

% \begin{tikzpicture}[
%     scale=1.2, % Adjust this to scale the entire graph up or down
%     vertex/.style={circle, draw=black, thick, fill=red!15, inner sep=1.5pt, minimum size=18pt, font=\sffamily}
% ]
%     % Define the coordinates for each vertex
%     % Placing node 7 at the origin (0,0) as a reference point
%     \coordinate (V7) at (0, 0);
%     \coordinate (V3) at (-2.5, 0);
%     \coordinate (V6) at (0, 2.5);
%     \coordinate (V1) at (1.5, 1.2);
%     \coordinate (V2) at (2.5, 4);
%     \coordinate (V4) at (3.5, -0.6);
%     \coordinate (V5) at (1.7, -2.2);

%     % Draw the edges first so they sit strictly behind the nodes
%     % Edges connected to the central node 1
%     \draw[thick] (V1) -- (V2);
%     \draw[thick] (V1) -- (V3);
%     \draw[thick] (V1) -- (V4);
%     \draw[thick] (V1) -- (V5);
%     \draw[thick] (V1) -- (V6);
%     \draw[thick] (V1) -- (V7);
    
%     % Remaining edges
%     \draw[thick] (V2) -- (V6);
%     \draw[thick] (V3) -- (V7);
%     \draw[thick] (V6) -- (V7);
%     \draw[thick] (V7) -- (V4);
%     \draw[thick] (V7) -- (V5);
%     \draw[thick] (V4) -- (V5);

%     % Draw the nodes on top of the edges
%     \node[vertex] at (V1) {1};
%     \node[vertex] at (V2) {2};
%     \node[vertex] at (V3) {3};
%     \node[vertex] at (V4) {4};
%     \node[vertex] at (V5) {5};
%     \node[vertex] at (V6) {6};
%     \node[vertex] at (V7) {7};

% \end{tikzpicture}

\begin{proof}
Let $d$ be a positive integer, not divisible by $3$. Let $\alpha$ be an algebraic number over $\mathbb{Q}$ of degree $d$ whose some three algebraic conjugates sum to zero. Suppose that $d$ is the smallest possible such positive integer. 

Stong (see \cite{DubickasSmyth2006}) showed that  
the irreducible polynomial
\begin{equation*}
x^{20} + 4\cdot5^{9}\cdot x^{10} + 16\cdot5^{15}
\end{equation*}
has three distinct roots, which sum to zero. Hence, $d\leq 20$. 
On the other hand, Dubickas and Jankauskas (see Theorem 1.2 in \cite{DubickasJankauskas2015}) proved that $d\geq 10$. 
Moreover, by Lemma \ref{intro4}, $d$ cannot be a prime number. 
Thus $d\in\{10,14,16,20\}$. Recently, Virbalas (see Theorem 1.1 in \cite{Virbalas2025a}) proved that $d\neq 2p$, where $p\geq 5$ is a 
prime number. So $d=16$ or $d=20$. We will prove that $d\neq 16$.

Assume, to the contrary, that there exists an algebraic number $\alpha$ of degree $d=16$ whose some three algebraic conjugates $\alpha_{1},\alpha_{2},\alpha_{3}$  satisfy the relation 
\begin{equation}\label{eq1}
\alpha_{1}+\alpha_{2}+\alpha_{3}=0.
\end{equation}
Clearly, these three conjugates can't be all equal. 
Moreover, if some two conjugates are equal, say $\alpha_1=\alpha_2$, then 
$2\alpha_1+\alpha_3=0$, which is impossible (we can choose an 
automorphism $\pi$ of the Galois group of the normal closure of $\mathbb{Q}(\alpha)$ 
which maps $\alpha_1$ to $\alpha'$ having the maximal absolute value $m$; then 
$2\alpha_1+\alpha_3=0$ is mapped to $2\alpha'+\pi(\alpha_3)=0$ and $2m=|2\alpha'|=|-\pi(\alpha_3)|\leq m$, which is impossible). Hence, all three conjugates in \eqref{eq1} are distinct.

Let $\alpha_1,\alpha_2,\dotsc,\alpha_{16}$ be the algebraic conjugates of $\alpha$.  Lemma $\ref{intro3}$ implies that $tr(\alpha)=\alpha_{1}+\alpha_{2}+\dotsb+\alpha_{16}=0$. Let $G$ be the Galois group of the normal closure of $\mathbb{Q}(\alpha)$ over $\mathbb{Q}$. Note that this normal closure is also the splitting field of the minimal polynomial of $\alpha$ over $\mathbb{Q}$, and therefore $G$ is the Galois group of this polynomial. The group $G$ corresponds to some transitive subgroup of the full symmetric group $S_{16}$.

Two relations $\alpha_{i}+\alpha_{j}+\alpha_{k}=0$ and $\alpha_{i'}+\alpha_{j'}+\alpha_{k'}=0$, where all the summands are  algebraic 
conjugates of $\alpha$, are called distinct, if 
\begin{equation*}
\{\alpha_{i},\alpha_{j},\alpha_{k}\}\neq \{\alpha_{i'},\alpha_{j'},\alpha_{k'}\}.
\end{equation*}

\begin{lemma}\label{lemri}
For any two distinct relations $\alpha_{i}+\alpha_{j}+\alpha_{k}=0$ and $\alpha_{i'}+\alpha_{j'}+\alpha_{k'}=0$ we have that $|\{i, j, k\}\cap \{i', j', k'\}|\leq 1$.
\end{lemma}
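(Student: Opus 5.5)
Suppose the two relations are distinct but share at least two indices. Since the index sets are distinct 3-element sets, they share exactly two indices. The plan is to show that the two remaining conjugates must then coincide, which contradicts distinctness.

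Relabel so that the common indices are $i=i'$ and $j=j'$. By the remark following \eqref{eq1}, every relation of this form involves three distinct conjugates, so $\{i,j,k\}$ and $\{i,j,k'\}$ are genuine 3-element sets. Distinctness of the relations then forces $k\neq k'$.

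Subtracting $\alpha_i+\alpha_j+\alpha_{k'}=0$ from $\alpha_i+\alpha_j+\alpha_k=0$ gives $\alpha_k-\alpha_{k'}=0$, that is, $\alpha_k=\alpha_{k'}$. The indices label the $16$ distinct roots of an irreducible (hence separable) polynomial, so $k\neq k'$ forces $\alpha_k\neq\alpha_{k'}$. This is a contradiction, so $|\{i,j,k\}\cap\{i',j',k'\}|\leq 1$.

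The only point needing care is that an intersection of size $3$ means the sets are equal, which is excluded by the definition of distinct relations. There is no real obstacle here; the argument uses only that the conjugates are distinct and that each relation has three distinct summands. None of the deeper Lemmas~\ref{intro7}, \ref{lemma4} or \ref{intro3} are needed.
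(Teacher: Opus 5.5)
Your proof is correct and is essentially the paper's argument. If the two relations share two indices, then subtracting one relation from the other (the paper writes $\alpha_i=-\alpha_j-\alpha_k=-\alpha_{j'}-\alpha_{k'}=\alpha_{i'}$) forces the two remaining conjugates to coincide, which is impossible because the $16$ conjugates are distinct.
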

\begin{proof}
Since the relations are distinct, we have that $|\{i, j, k\}\cap \{i', j', k'\}|\leq 2$. 
On the other hand, if say $\{j, k\}= \{j', k'\}$, then 
$\alpha_{i}=-\alpha_{j}-\alpha_{k}=-\alpha_{j'}-\alpha_{k'} = \alpha_{i'}$, which is impossible. Hence, the claim follows.
\end{proof}

\begin{lemma}\label{lemr6}
For any fixed $i_0\in\{1,2,\dotsc,16\}$ the number of distinct relations of the form $\alpha_{i_0}+\alpha_{j}+\alpha_{k}=0$ is less than 6.
\end{lemma}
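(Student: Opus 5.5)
The plan is to bound the number of relations through $\alpha_{i_0}$ by a disjointness argument, and then to rule out the two extreme values $7$ and $6$ by summing all the relations and using $tr(\alpha)=0$.

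Let $r$ be the number of distinct relations $\alpha_{i_0}+\alpha_{j}+\alpha_{k}=0$. By Lemma~\ref{lemri}, any two of them share only the index $i_0$. Hence the $r$ unordered pairs $\{j,k\}$ are pairwise disjoint subsets of the $15$ indices different from $i_0$, so $2r\leq 15$, i.e.\ $r\leq 7$. It therefore suffices to show that $r=7$ and $r=6$ are both impossible.

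Adding up all $r$ relations gives
\begin{equation*}
r\alpha_{i_0}+\sum_{l\in S}\alpha_l=0,
\end{equation*}
where $S$ is the set of $2r$ indices occurring in the pairs. Since $S$ has $2r$ elements and avoids $i_0$, its complement in $\{1,\dots,16\}\setminus\{i_0\}$, call it $T$, has $15-2r$ elements. By Lemma~\ref{intro3} we have $tr(\alpha)=0$, which gives
\begin{equation*}
\sum_{l\in S}\alpha_l=-\alpha_{i_0}-\sum_{m\in T}\alpha_m,
\end{equation*}
and hence
\begin{equation*}
(r-1)\alpha_{i_0}=\sum_{m\in T}\alpha_m .
\end{equation*}

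\emph{Case $r=6$.} Here $T=\{a,b,c\}$ consists of three indices distinct from each other and from $i_0$, and the relation reads
\begin{equation*}
5\alpha_{i_0}-\alpha_a-\alpha_b-\alpha_c=0\in\mathbb{Q}.
\end{equation*}
The coefficients satisfy $|5|\geq 1+1+1$, so this contradicts Lemma~\ref{lemma4} with $n=4$ and $K=\mathbb{Q}$.

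\emph{Case $r=7$.} Here $T=\{m\}$ is a single index and we get $6\alpha_{i_0}=\alpha_m$. Lemma~\ref{lemma4} needs $n\geq 3$, so it does not apply directly; this is the one step needing a different tool. Instead we use the maximal-modulus argument already used above for $2\alpha_1+\alpha_3=0$. Choose $\pi\in G$ mapping $\alpha_{i_0}$ to a conjugate of maximal absolute value $M>0$. Applying $\pi$ gives $6M=|\pi(\alpha_m)|\leq M$, which is impossible. (Alternatively, taking norms gives $6^{16}N(\alpha)=N(\alpha)$ with $N(\alpha)\neq 0$, also a contradiction.)

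Hence $r\leq 5<6$, which proves the statement. The only delicate point is checking that the conjugates in the final relations are genuinely distinct, which holds because $S$, $T$ and $\{i_0\}$ are disjoint.
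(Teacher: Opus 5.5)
Your proof is correct and is essentially the paper's argument: the pairs $\{j,k\}$ are disjoint by Lemma~\ref{lemri}, you sum the relations and use $tr(\alpha)=0$, and you reach $5\alpha_{i_0}-\alpha_a-\alpha_b-\alpha_c=0$, which contradicts Lemma~\ref{lemma4}. The only difference is that your separate case $r=7$ is unnecessary, though your maximal-modulus treatment of it is valid: the paper assumes at least six relations and sums any six of them, which gives the same four-term relation whether or not a seventh exists.
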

\begin{proof}
Assume, to the contrary, that for a fixed $i_0$ there exist six distinct relations of the form $\alpha_{i_0}+\alpha_{j}+\alpha_{k}=0$. 
Lemma~\ref{lemri} implies that for any two distinct relations $\alpha_{i_0}+\alpha_{j_1}+\alpha_{k_1}=0$ and $\alpha_{i_0}+\alpha_{j_2}+\alpha_{k_2}=0$ 
the sets $\{j_1,k_1\}$ and $\{j_2,k_2\}$ are disjoint. Without loss of generality, we consider the following six relations (after relabeling the conjugates of $\alpha$, if necessary):
\begin{equation*}
\begin{split}
\alpha_{1}&+\alpha_{2}+\alpha_{3}=0,\\
\alpha_{1}&+\alpha_{4}+\alpha_{5}=0,\\
\alpha_{1}&+\alpha_{6}+\alpha_{7}=0,\\
\alpha_{1}&+\alpha_{8}+\alpha_{9}=0,\\ 
\alpha_{1}&+\alpha_{10}+\alpha_{11}=0,\\
\alpha_{1}&+\alpha_{12}+\alpha_{13}=0.
\end{split}
\end{equation*}
By adding all of them and using $tr(\alpha)=0$, we obtain
\begin{equation*}
6\alpha_{1}+\alpha_{2}+\alpha_{3}+\dots+\alpha_{13}=5\alpha_{1}-\alpha_{14}-\alpha_{15}-\alpha_{16}=0,
\end{equation*}
which is impossible by Lemma \ref{lemma4}.
\end{proof}

We say that two relations $\alpha_{i}+\alpha_{j}+\alpha_{k}=0$ and $\alpha_{i'}+\alpha_{j'}+\alpha_{k'}=0$ are \textit{conjugate} if 
there exists an automorphism $\pi\in G$ such that
\begin{equation*}
\{\alpha_{i},\alpha_{j},\alpha_{k}\} = \{\pi(\alpha_{i'}),\pi(\alpha_{j'}),\pi(\alpha_{k'})\}.
\end{equation*}
One can easily see that this conjugacy relation is an equivalence relation on the set of all possible relations of the form $\alpha_{i}+\alpha_{j}+\alpha_{k}=0$. 

\begin{lemma}\label{lemrc}
The following statements are true.
\begin{itemize}
\item[(i)] Any two relations of the form $\alpha_{i}+\alpha_{j}+\alpha_{k}=0$ are conjugate.
\item[(ii)] There are exactly 16 distinct such relations. 
\item[(iii)] Each algebraic conjugate of $\alpha$ appears in exactly 3 distinct such relations.
\end{itemize}
\end{lemma}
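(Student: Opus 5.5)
The plan is a double-counting argument on the orbits of $G$ acting on the set of relations, combined with the bound of Lemma~\ref{lemr6} and the fact that $3\nmid 16$.

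First I check that $G$ acts on the set of relations. If $\alpha_i+\alpha_j+\alpha_k=0$ is a relation with distinct $\alpha_i,\alpha_j,\alpha_k$ and $\pi\in G$, then applying $\pi$ gives $\pi(\alpha_i)+\pi(\alpha_j)+\pi(\alpha_k)=0$. Here $\pi(\alpha_i),\pi(\alpha_j),\pi(\alpha_k)$ are again three distinct conjugates of $\alpha$, since $\pi$ is injective and permutes the roots of the minimal polynomial. So $\pi$ sends relations to relations, and the conjugacy classes of relations are exactly the $G$-orbits on the set of $3$-element subsets $\{\alpha_i,\alpha_j,\alpha_k\}$ that sum to zero.

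Next fix one conjugacy class $C$, for example the class of \eqref{eq1}. For each conjugate $\alpha_m$, let $r_m$ be the number of relations in $C$ in which $\alpha_m$ appears.
\begin{itemize}
\item[(a)] $r_m$ does not depend on $m$. Since $G$ is transitive on $\{\alpha_1,\dots,\alpha_{16}\}$, for any $m,m'$ there is $\pi\in G$ with $\pi(\alpha_m)=\alpha_{m'}$. This $\pi$ induces a bijection from the relations in $C$ containing $\alpha_m$ onto the relations in $C$ containing $\alpha_{m'}$, because $C$ is $G$-stable and $\pi^{-1}$ gives the inverse map. So $r_m=r$ for all $m$.
\item[(b)] Double counting the incidences between conjugates and the relations of $C$ gives $3|C| = 16r$.
\item[(c)] Since $\gcd(3,16)=1$, it follows that $3\mid r$.
\item[(d)] We have $r\geq 1$ because $C$ is nonempty. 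Lemma~\ref{lemr6} gives $r\leq 5$, since the total number of relations containing a fixed conjugate is already less than $6$. Hence $r=3$ and $|C|=16$.
\end{itemize}

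Finally, suppose there were a second class $C'\neq C$. The same argument gives that each conjugate lies in exactly $3$ relations of $C'$. Since $C$ and $C'$ are disjoint, each conjugate would then lie in at least $6$ distinct relations, contradicting Lemma~\ref{lemr6}. Hence all relations are conjugate, which is (i). There are exactly $|C|=16$ relations, which is (ii). Each conjugate appears in exactly $r=3$ of them, which is (iii).

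I expect no real obstacle. The only points needing care are the well-definedness of the action on unordered triples and the constancy of $r$ via transitivity. The arithmetic step that forces $r=3$ rests on $3\nmid 16$ together with the bound $r<6$ from Lemma~\ref{lemr6}.
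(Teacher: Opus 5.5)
Your proposal is correct and follows the paper's proof essentially step for step: transitivity gives a constant incidence count $l$, double counting gives $3N=16l$, hence $3\mid l$, and Lemma~\ref{lemr6} forces $l=3$ and $N=16$; a second conjugacy class would then put each conjugate in six distinct relations, again contradicting Lemma~\ref{lemr6}. You also spell out details the paper leaves implicit, namely that $G$ acts on unordered zero-sum triples and that $l\geq 1$.
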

\begin{proof}
Consider a relation $\alpha_{i}+\alpha_{j}+\alpha_{k}=0$ and its equivalence class $\mathcal{C}(\alpha_{i},\alpha_{j},\alpha_{k})$, 
consisting of all the distinct relations that are conjugate to $\alpha_{i}+\alpha_{j}+\alpha_{k}=0$. Let $N=|\mathcal{C}(\alpha_{i},\alpha_{j},\alpha_{k})|$. 
Since the Galois group $G$ acts transitively on the set $\{\alpha_1,\alpha_2,\dotsc,\alpha_{16}\}$, each conjugate $\alpha_{i}$ appears an 
equal number of times, say $l$, in the relations in $\mathcal{C}(\alpha_{i},\alpha_{j},\alpha_{k})$. 
We have $N$ distinct relations in $\mathcal{C}(\alpha_{i},\alpha_{j},\alpha_{k})$ and each such relation involves three distinct conjugates of $\alpha$. 
Thus, there are $3N$ appearances of conjugates of $\alpha$ in $\mathcal{C}(\alpha_{i},\alpha_{j},\alpha_{k})$. 
On the other hand, this number can be counted in a different way -- each conjugate of $\alpha$ appears exactly $l$ times. 
Hence, $3N = 16l$. So that $l$ is divisible by 3. Since, by Lemma~\ref{lemr6}, $l<6$, we obtain that $l=3$ and accordingly $N=16$.

Assume that there are two distinct equivalence classes. Each contains three distinct relations involving $\alpha_1$. So that in total we have six distinct relations involving $\alpha_1$. This contradicts Lemma~\ref{lemr6}. Hence, the lemma follows.
\end{proof}

By Lemma~\ref{lemrc}, we have exactly 16 distinct relations of the form $\alpha_{i}+\alpha_{j}+\alpha_{k}=0$ and each $\alpha_i$ appears in exactly 3 distinct such relations.
Without loss of generality, we assume the following system of distinct relations:
\begin{equation}\label{eq2}
\begin{alignedat}{6}
&\alpha_{1} &  &+ &\;\; &\alpha_{2} & &+ &\;\; &\alpha_{3} & &= 0,\\
&\alpha_{1} &  &+ & &\alpha_{4} & &+ & &\alpha_{5} & &= 0,\\
&\alpha_{1} &  &+ & &\alpha_{6} & &+ & &\alpha_{7} & &= 0,\\
&\alpha_{i_{4\,1}} & &+ & &\alpha_{i_{4\,2}} & &+ & &\alpha_{i_{4\,3}} & &= 0,\\
&\alpha_{i_{5\,1}} & &+ & &\alpha_{i_{5\,2}} & &+ & &\alpha_{i_{5\,3}} & &= 0,\\
 & &          &\vdots &&&&\vdots &&&&\vdots\\
&\alpha_{i_{16\,1}} & &+ & &\alpha_{i_{16\,2}} & &+ & &\alpha_{i_{16\,3}} & &= 0.
\end{alignedat}
\end{equation}
This system of relations is equivalent to the following one:
\begin{equation}\label{eq3}
\begin{alignedat}{13}
-\alpha_{1}& &\;\; &= &\;\;&\alpha_{2} & &+&\;\; & \alpha_{3} & &= &\;\; &\alpha_{4} & &+ &\;\; &\alpha_{5}& &= &\;\; &\alpha_{6} & &+ &\;\; &\alpha_{7},\\
-\alpha_{2}& & &= &\;\;&\alpha_{1} & &+ & &\alpha_{3} & &= & &\alpha_{j_{2\,3}} & &+ & &\alpha_{j_{2\,4}}& &= & &\alpha_{j_{2\,5}} & &+ & &\alpha_{j_{2\,6}},\\
-\alpha_{3}& & &= &\;\;&\alpha_{1} & &+ & &\alpha_{2} & &= & &\alpha_{j_{3\,3}} & &+ & &\alpha_{j_{3\,4}} & &= & &\alpha_{j_{3\,5}} & &+ & &\alpha_{j_{3\,6}},\\
-\alpha_{4}& & &= &\;\;&\alpha_{1} & &+ & &\alpha_{5} & &= & &\alpha_{j_{4\,3}} & &+ & &\alpha_{j_{4\,4}} & & = & &\alpha_{j_{4\,5}} & &+ & &\alpha_{j_{4\,6}},\\
-\alpha_{5}& & &= &\;\;&\alpha_{1} & &+ & &\alpha_{4} & &= & &\alpha_{j_{5\,3}} & &+ & &\alpha_{j_{5\,4}} & &= & &\alpha_{j_{5\,5}} & &+ & &\alpha_{j_{5\,6}},\\
-\alpha_{6}& & &= &\;\;&\alpha_{1} & &+ & &\alpha_{7} & &= & &\alpha_{j_{6\,3}} & &+ & &\alpha_{j_{6\,4}} & &= & &\alpha_{j_{6\,5}} & &+ & &\alpha_{j_{6\,6}},\\
-\alpha_{7}& & &= &\;\;&\alpha_{1} & &+ & &\alpha_{6} & &= & &\alpha_{j_{7\,3}} & &+ & &\alpha_{j_{7\,4}} & &= & &\alpha_{j_{7\,5}} & &+ & &\alpha_{j_{7\,6}},\\
-\alpha_{8}& & &= &\;\;&\alpha_{j_{8\,1}} & &+ & &\alpha_{j_{8\,2}} & &= & &\alpha_{j_{8\,3}} & &+ & &\alpha_{j_{8\,4}} & &=& & \alpha_{j_{8\,5}} & &+ & &\alpha_{j_{8\,6}},\\
           & & &\vdots &\;\;&             & &   & &                 & &\vdots& &              & &     & &               & &\vdots& &           & & & & \\
-\alpha_{16}& & &= &\;\;&\alpha_{j_{16\,1}} & &+ & &\alpha_{j_{16\,2}} & &= & &\alpha_{j_{16\,3}} & &+ & &\alpha_{j_{16\,4}} & &= & &\alpha_{j_{16\,5}} & &+ & &\alpha_{j_{16\,6}}.
\end{alignedat}
\end{equation}

%Note that each $\alpha_{i}$ appears exactly six times on the right hand side in \eqref{eq3}. 
%We will show that \eqref{eq3} system (and, consequently, \eqref{eq2}) cannot have a solution. Assume for a contradiction that solution to \eqref{eq3} exists. 
Note that $-\alpha_{1}, -\alpha_{2},\dots, -\alpha_{16}$ are the algebraic conjugates of $-\alpha_{1}$. 
%Indeed, if $p(x)$ is the minimal polynomial of $\alpha$ over $\mathbb{Q}$ and $\deg(\alpha)=16$, then $p(-x)$ is the minimal polynomial of $-\alpha$ over $\mathbb{Q}$. 
This implies that $\deg(\alpha_{1}+\alpha_{2})=16$ and each sum $\alpha_{i}+\alpha_{j}$ appearing in \eqref{eq3} is an algebraic conjugate of $\alpha_{1}+\alpha_{2}$. 
%Moreover, it is a full set of algebraic conjugates of $\alpha_{1}+\alpha_{2}$. 
Note that each algebraic conjugate of $\alpha_{1}+\alpha_{2}$ has exactly three distinct representations as a sum $\alpha_{i}+\alpha_{j}$ (see $(iii)$ in Lemma~\ref{lemrc}).

Now, we will introduce the graph theory approach to this problem. 
Consider the graph $\mathcal{G}=(V, E)$ with the set of vertices $V:=\{\alpha_{1}, \alpha_{2}, \dots, \alpha_{16}\}$. 
Two vertices $\alpha_{i}$ and $\alpha_{j}$ are connected by an edge, denoted $(\alpha_{i}, \alpha_{j})$ (or $(\alpha_{j}, \alpha_{i})$), 
if the sum $\alpha_{i}+\alpha_{j}$ is an algebraic conjugate of $\alpha_{1}+\alpha_{2}$, i.e., 
the sum $\alpha_{i}+\alpha_{j}$ appears in \eqref{eq3}. 
Note that each vertex of $\mathcal{G}$ must have the same number of edges. 
In other words, graph $\mathcal{G}$ is regular. 
This follows directly from the fact that each $\alpha_{i}$ appears exactly six times  
in \eqref{eq3} (or, equivalently, each $\alpha_{i}$ appears in three distinct relations $\alpha_{i}+\alpha_{j}+\alpha_{k}=0$ and each such relation implies two edges $(\alpha_i,\alpha_j)$ and $(\alpha_i,\alpha_k)$, connecting the vertex $\alpha_i$), i.e., $\deg(v)=6$ for each $v\in V$. In such a setting, $\mathcal{G}$ must have 
$(16\cdot6)/2=48$ edges. Furthermore, since the Galois group $G$ is transitive on the set of conjugates $\{\alpha_{1}, \alpha_{2}, \dots, \alpha_{16}\}$ and, by Lemma~\ref{lemrc}, any two relations of the form $\alpha_{i}+\alpha_{j}+\alpha_{k}=0$ are conjugate, the graph 
$\mathcal{G}$ must be vertex-transitive. %Thus, $|V|=16, |E|=48$, and $\mathcal{G}$ is regular graph.   
%A simple search with SageMath [???] shows that
It is known that there are exactly $40$ vertex-transitive graphs on $16$ vertices of degree 
$6$. 
This was established by McKay and Royle \cite{mckay1990transitive} the complete enumeration of transitive graphs on at most 26 vertices. 
The explicit list is maintained in \cite{HoltRoyle2020}\footnote{https://doi.org/10.5281/zenodo.4010122} (see also the House of Graphs \cite{Coolsaet2023}). All 40 of these graphs, named $\mathcal{G}_{1},\mathcal{G}_{2},\dotsc, \mathcal{G}_{40}$, are given in Table~\ref{table:t1}. 
Note that these graphs are given in graph6 format (this format was invented by Brendan McKay \cite{McKayPiperno2014, McKayFormats} and is recognized by SageMath \cite{sagemath}).

%\begin{center}
%\scalebox{0.8}{      
%\vspace{0,2cm}

\begin{table}[h!]
\centering
\scalebox{0.85}{
\begin{tabular}{ |c|c| }
\hline
Graph & Code in $graph6$ format\\ 
\hline
\hline
$\mathcal{G}_{1}$ & OsaC???FzrMkYwXwJw@|?\\ 
$\mathcal{G}_{2}$ & OsaC???RxnNKYw$\setminus$WJs@\}?\\ 
$\mathcal{G}_{3}$ & OsaC???]ZrK\{XwFwB\{B\{?\\
$\mathcal{G}_{4}$ & OsaCB@\_EWrKrXeFwB\{B\{?\\
$\mathcal{G}_{5}$ & OsaKYCQGbRIjXKLWJEHqc\\
$\mathcal{G}_{6}$ & OsaKYCcQXRBKSbLDmTBi\_\\
$\mathcal{G}_{7}$ & OsaKYDDGgdLBUELQeibr?\\
$\mathcal{G}_{8}$ & OsaKYPDHOiDFEM[wMPRcg\\
$\mathcal{G}_{9}$ & OsaKg?dQGid$\setminus$[S[qLSQy\_\\
$\mathcal{G}_{10}$ & OsaKg?hSZEkkTIIdJWPyA\\
$\mathcal{G}_{11}$ & OsaKiCaC\textasciigrave RbMYKTYLabiC\\
$\mathcal{G}_{12}$ & OsaKiCdPPDaUBR]WNAboS\\
$\mathcal{G}_{13}$ & OsaSWSTOhDKiXQUEfBbr?\\
$\mathcal{G}_{14}$ & OsaSXCdOha\textasciigrave T[DRRNEAyC\\
$\mathcal{G}_{15}$ & OsaSXDCSXRbKTBIdmSBY@\\
$\mathcal{G}_{16}$ & OsaSYHBGpH\textasciigrave XDL]SNDBoK\\
$\mathcal{G}_{17}$ & Osedw?DOYFHBSZW$\setminus$Fg@w\textasciigrave \\
$\mathcal{G}_{18}$ & Osedw?HOYFGbSZW$\setminus$Fg@w\textasciigrave \\
$\mathcal{G}_{19}$ & Osedw@??pJhMP$\setminus$UWEjBpA\\
$\mathcal{G}_{20}$ & OsfDw?@G\textasciigrave bgmW$\setminus$RWFFAyA\\
\hline
\end{tabular}
%}
\quad
%\scalebox{0.8}{
\begin{tabular}{ |c|c| }
\hline
Graph & Code in $graph6$ format\\ 
\hline
\hline
$\mathcal{G}_{21}$ & OsfDw@@GXPCZP]TSFDayA\\
$\mathcal{G}_{22}$ & OsfLg?@WYHcZQYKXJcPuA\\
$\mathcal{G}_{23}$ & Osqsw@@AXCclW]USEXaxA\\
$\mathcal{G}_{24}$ & Osqsy@@GWRcdGtUEESqyA\\
$\mathcal{G}_{25}$ & OtaCXOTHOphhRKSsKTJcK\\
$\mathcal{G}_{26}$ & OtaLw?@OaRgn[S[Kdk?z@\\
$\mathcal{G}_{27}$ & OtaLw?@ObBiNQ[P[fg@x@\\
$\mathcal{G}_{28}$ & OtaLw?BOBBhNP[S[fa@y@\\
$\mathcal{G}_{29}$ & OtaLy@@AWJg[WFSFfg@x@\\
$\mathcal{G}_{30}$ & OtaLyD\textasciigrave SYQGhKBKB\textasciigrave eOXb\\
$\mathcal{G}_{31}$ & Ota$\setminus$W@@GYChJS]PZFc@u@\\
$\mathcal{G}_{32}$ & Otakw?@QYbKLPLOufc@u@\\
$\mathcal{G}_{33}$ & Otaky@@GWbhBPROlfc@u@\\
$\mathcal{G}_{34}$ & OtrTOGBW@\textasciigrave hIP]G|Bg\_tP\\
$\mathcal{G}_{35}$ & Ouj$\setminus$w?@?YBcMSUILIhPTI\\
$\mathcal{G}_{36}$ & O\{fL\_@HKQJCZCsBW\_pzp\_\\
$\mathcal{G}_{37}$ & O\{fL\_CCQP\textasciigrave GmCzB[C$\setminus$Joo\\
$\mathcal{G}_{38}$ & O\}akqPPWOV@iHIDHcROcj\\
$\mathcal{G}_{39}$ & O\textasciitilde{}aKYPDOxQBHHIGeacocj\\
$\mathcal{G}_{40}$ & O\textasciitilde{}z$\setminus$w?@?WB\_M?Z?V\_Fz\{?\\
\hline
\end{tabular}
}
\vspace{0,2cm}
\caption{All $40$ vertex-transitive graphs on $16$ vertices of degree $6$ (see \cite{HoltRoyle2020}).}
\label{table:t1}
\end{table}
%\vspace{0,2cm}
%}
%\end{center}

%We will show that $\mathcal{G}\neq\mathcal{G}_{i}$, for $i=1,2,\dots 40$. 
%We will show that $\mathcal{G}\not\cong\mathcal{G}_{i}$, for $i=1,2,\dots 40$, i.e., the graph $\mathcal{G}$ is not isomorphic to any of the graphs mentioned in Table~\ref{table:t1}.
The goal is to prove that for all $i=1,2,\dots 40$, graphs $\mathcal{G}$ and $\mathcal{G}_{i}$ are not isomorphic.

Note that a relation $\alpha_{i}+\alpha_{j}+\alpha_{k}=0$ in \eqref{eq2} implies that $\alpha_{i}+\alpha_{j}=-\alpha_{k}, \alpha_{i}+\alpha_{k}=-\alpha_{j}$, and $\alpha_{j}+\alpha_{k}=-\alpha_{i}$ are algebraic conjugates of $\alpha_{1}+\alpha_{2}=-\alpha_{3}$. In other words, edges $(\alpha_{i}, \alpha_{j}), (\alpha_{i}, \alpha_{k})$, and $(\alpha_{j}, \alpha_{k})$ form a triangle in our graph $\mathcal{G}$. 
On the other hand, if for some three distinct $\alpha_{a}, \alpha_{b}, \alpha_c$ we have that three 
edges $(\alpha_{a}, \alpha_{b}), (\alpha_{b}, \alpha_{c})$ and $(\alpha_{a}, \alpha_{c})$ belong to 
our graph $\mathcal{G}$, then not necessarily the relation $\alpha_{a}+ \alpha_{b}+\alpha_{c}=0$ holds. 
If it does, then we say that the vertices $\alpha_{a}, \alpha_{b}, \alpha_{c}$ form a \textit{zero-sum triangle} $\triangle(\alpha_{a}, \alpha_{b}, \alpha_{c})$.

We will recall several concepts from Graph Theory. Let $G=(V,E)$ be a graph. 
Two vertices that are connected directly by an edge  
are called \textit{adjacent vertices}.  
A vertex is said to be \textit{incident to an edge} (and the edge is incident to the vertex) if that vertex 
is one of the endpoints of the edge. 
The \textit{degree} (or \textit{valency}) of a vertex $v\in V$ is defined as the number of edges in $G$ that are incident to $v$, and denoted by $\deg_G(v)$. 
The set of all vertices that are adjacent to a given vertex $v\in V$, except for $v$ itself, is  
called \textit{the open neighborhood}  of $v$, and denoted by $N(v)$. 
Then the set $N[v]:=N(v)\cup\{v\}$ is called \textit{the closed neighborhood}  of $v$. 
For a given subset $V'$ of the set of vertices $V$ let $E'$ be a subset of the set of edges $E$ 
such that the edge $vv'\in E$ belongs to $E'$ if and only if both vertices $v$ and $v'$ belong to $V'$. 
Such a graph $\tilde{G}=(V',E')$ is called \textit{the induced graph} on the subset of vertices $V'$. 
For a given vertex $v$ denote by $G(v)$ the induced graph on the closed neighborhood $N[v]$. 
Note that for $v'\in G(v)$ the number $\deg_{G(v)}(v')$ is the degree of the vertex $v'$ with respect to the induced graph $G(v)$ while $\deg_{G}(v')$ is the degree of $v'$ with respect to the graph $G$.

\begin{lemma}\label{lemzstp}
The following statements are true for the graph $\mathcal{G}$.
\begin{itemize}
\item[(i)] Two distinct zero-sum triangles share at most one vertex.
\item[(ii)] Every edge $(\alpha,\alpha')$, $\alpha\neq\alpha'$, belongs to exactly one zero-sum triangle 
$\triangle(\alpha, \alpha', \alpha'')$ and the vertex $\alpha''$ belongs to the open neighborhood $N(\alpha)$ of $\alpha$.
\item[(iii)] If for some three distinct vertices $\alpha, \alpha', \alpha''$ we have that 
$\alpha', \alpha''\in N(\alpha)$, $\deg_{\mathcal{G}(\alpha)}(\alpha')=2$ and $(\alpha',\alpha'')$ is an edge in $\mathcal{G}$, then $\triangle(\alpha, \alpha', \alpha'')$ is a zero-sum triangle.
\item[(iv)] If $\triangle(\alpha, \alpha_a, \alpha_b)$ and $\triangle(\alpha, \alpha_c, \alpha_d)$ are 
two distinct zero-sum triangles, then either $(\alpha_a,\alpha_d)$ or $(\alpha_b,\alpha_c)$ is not an edge in $\mathcal{G}$.
\end{itemize}
\end{lemma}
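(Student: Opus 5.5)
The four parts are nearly independent. Each follows from the definition of the edges of $\mathcal{G}$, from Lemma~\ref{lemri}, and, for (iv), from Smyth's Lemma~\ref{intro7}. The main step is (iv); the rest are bookkeeping.

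For (i), a zero-sum triangle $\triangle(\alpha_a,\alpha_b,\alpha_c)$ is exactly a relation $\alpha_a+\alpha_b+\alpha_c=0$. Two distinct zero-sum triangles are therefore two distinct relations, and Lemma~\ref{lemri} says they share at most one conjugate.

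For (ii), take an edge $(\alpha,\alpha')$. By definition of $\mathcal{G}$, the sum $\alpha+\alpha'$ occurs on a right-hand side of \eqref{eq3}, so $\alpha+\alpha'=-\alpha''$ for some conjugate $\alpha''$, i.e.\ $\alpha+\alpha'+\alpha''=0$. We showed at the start of the proof that the three summands of such a relation are distinct, so $\triangle(\alpha,\alpha',\alpha'')$ is a zero-sum triangle. It is unique because $\alpha''=-\alpha-\alpha'$ is determined; equivalently, two such triangles would share two vertices, contradicting (i). Finally, $\alpha+\alpha''=-\alpha'$ is also a sum appearing in \eqref{eq3}, so $(\alpha,\alpha'')$ is an edge and $\alpha''\in N(\alpha)$.

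For (iii), apply (ii) to the edge $(\alpha,\alpha')$. This gives a zero-sum triangle $\triangle(\alpha,\alpha',\beta)$ with $\beta\in N(\alpha)$, and $\beta$ is adjacent to $\alpha'$. So in $\mathcal{G}(\alpha)$ the vertex $\alpha'$ is adjacent to the two distinct vertices $\alpha$ and $\beta$. Since $\deg_{\mathcal{G}(\alpha)}(\alpha')=2$, these are its only neighbours there. Now $\alpha''\in N(\alpha)\subset N[\alpha]$ is adjacent to $\alpha'$ and $\alpha''\neq\alpha$, hence $\alpha''=\beta$, and the triangle $\triangle(\alpha,\alpha',\alpha'')$ is zero-sum.

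For (iv), suppose both $(\alpha_a,\alpha_d)$ and $(\alpha_b,\alpha_c)$ are edges. By (i), the vertices $\alpha,\alpha_a,\alpha_b,\alpha_c,\alpha_d$ are pairwise distinct. By (ii) there are conjugates $\gamma,\delta$ with $\alpha_a+\alpha_d=-\gamma$ and $\alpha_b+\alpha_c=-\delta$. Adding these and using $\alpha_a+\alpha_b=-\alpha=\alpha_c+\alpha_d$ gives
\[
-\gamma-\delta=(\alpha_a+\alpha_b)+(\alpha_c+\alpha_d)=-2\alpha,\qquad\text{i.e.}\qquad 2\alpha=\gamma+\delta .
\]
By Lemma~\ref{intro7} this forces $\gamma=\alpha$. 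Then $\alpha+\alpha_a+\alpha_d=0$ is a zero-sum triangle sharing the two vertices $\alpha,\alpha_a$ with $\triangle(\alpha,\alpha_a,\alpha_b)$. By (i) it must be the same triangle, so $\alpha_d=\alpha_b$, contradicting distinctness. Hence at least one of the two pairs is not an edge.
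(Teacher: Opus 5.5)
Your proof is correct and follows the paper's argument essentially step by step: (i) comes from Lemma~\ref{lemri}, (ii) from the definition of the edges, (iii) from the same neighbour count in $\mathcal{G}(\alpha)$, and (iv) from summing to get $2\alpha=\gamma+\delta$ and invoking Lemma~\ref{intro7}. The only difference is cosmetic: in (iv) the paper first uses (i) to rule out $\gamma=\alpha$ and then applies Lemma~\ref{intro7}, whereas you apply Lemma~\ref{intro7} first to force $\gamma=\alpha$ and then contradict (i).
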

\begin{proof}
Part $(i)$ follows from the definition of the graph $\mathcal{G}$ and Lemma~\ref{lemri}.

$(ii)$. Let $(\alpha,\alpha')$, $\alpha\neq\alpha'$, be an edge in $\mathcal{G}$. 
Then, according to the definition of $\mathcal{G}$, the sum $\alpha+\alpha'$ is an algebraic conjugate of 
$\alpha_1+\alpha_2=-\alpha_3$. 
Hence, $\alpha+\alpha'=-\alpha''$, where $\alpha''$ is an algebraic conjugate of $\alpha$. 
So that $\alpha+\alpha'+\alpha''=0$, which implies that the triangle $\triangle(\alpha, \alpha', \alpha'')$ 
is a zero-sum triangle. Part $(i)$ ensures that $\triangle(\alpha, \alpha', \alpha'')$ is the unique triangle containing the edge $(\alpha,\alpha')$. Moreover, $(\alpha,\alpha'')$ is also an edge in $\mathcal{G}$, since 
$\alpha+\alpha''=-\alpha'$ is an algebraic conjugate of $\alpha_1+\alpha_2=-\alpha_3$. 
So $\alpha''$ is adjacent to $\alpha$, $\alpha''\neq\alpha$,  and therefore $\alpha''\in N(\alpha)$. 

$(iii)$. Since $\alpha'\in N(\alpha)$, we have that $(\alpha,\alpha')$ is an edge in $\mathcal{G}$. 
Applying part $(ii)$ we obtain that there exists a vertex $\tilde{\alpha}\in N(\alpha)$ such that 
the triangle $\triangle(\alpha, \alpha', \tilde{\alpha})$ is a zero-sum triangle. 
This together with $\deg_{\mathcal{G}(\alpha)}(\alpha')=2$ imply that in the induced graph $\mathcal{G}(\alpha)$ 
the vertex $\alpha'$ is adjacent only to $\alpha$ and $\tilde{\alpha}$. 
On the other hand, the edge $(\alpha',\alpha'')$ is  in $\mathcal{G}$ and $\alpha', \alpha''\in N(\alpha)$. 
Thus, the edge $(\alpha',\alpha'')$ belongs to the induced graph $\mathcal{G}(\alpha)$, and therefore 
$\alpha'$ is adjacent to $\alpha''$ in $\mathcal{G}(\alpha)$. Hence, $\tilde{\alpha}=\alpha''$ and the claim follows.  

$(iv)$. Assume, to the contrary, that $\triangle(\alpha, \alpha_a, \alpha_b)$ and $\triangle(\alpha, \alpha_c, \alpha_d)$ are 
two distinct zero-sum triangles and both $(\alpha_a,\alpha_d)$ and $(\alpha_b,\alpha_c)$ are edges in $\mathcal{G}$. 
Part $(ii)$ implies the existence of zero-sum triangles $\triangle(\alpha_a, \alpha_d, \alpha')$ and $\triangle(\alpha_b, \alpha_c, \alpha'')$. Note that $\alpha'\neq \alpha$, since, in view of part $(i)$, zero-sum triangles 
$\triangle(\alpha, \alpha_a, \alpha_b)$ and $\triangle(\alpha_a, \alpha_d, \alpha')$ share exactly one vertex $\alpha_a$. 
Similarly, $\alpha''\neq \alpha$. Now, the four obtained zero-sum triangles imply the relations
\begin{equation}\label{eql4}
\begin{split}
\alpha &+ \alpha_a + \alpha_b = 0,\\
\alpha &+ \alpha_c + \alpha_d = 0,\\
\alpha_a &+ \alpha_d + \alpha' = 0,\\
\alpha_b &+ \alpha_c + \alpha'' = 0.
\end{split}
\end{equation}
By adding the first two relations in \eqref{eql4} and using the expressions for $\alpha'$ and $\alpha''$ 
from the last two relations in \eqref{eql4}, we obtain
\begin{equation*}
2\alpha+ (\alpha_a+\alpha_d)+(\alpha_b+\alpha_c)=2\alpha-\alpha'-\alpha''=0.
\end{equation*}
This contradicts Lemma~\ref{intro7}, since $\alpha'\neq \alpha$.
%If $\alpha'=\alpha''$, then the last equality implies $\alpha=\alpha'$, 
%which is impossible. Hence, the set 
%$\{\alpha,\alpha',\alpha''\}$ contains at least two distinct numbers. 
%But then the relation $2\alpha-\alpha'-\alpha''=0$ contradicts Lemma~\ref{intro7}.
\end{proof}

%Note that a relation $\alpha_{i}+\alpha_{j}+\alpha_{k}=0$ in \eqref{eq2} implies that $\alpha_{i}+\alpha_{j}=-\alpha_{k}, \alpha_{i}+\alpha_{k}=-\alpha_{j}$, and $\alpha_{j}+\alpha_{k}=-\alpha_{i}$ are algebraic conjugates of $\alpha_{1}+\alpha_{2}=-\alpha_{3}$. In other words, edges $(\alpha_{i}, \alpha_{j}), (\alpha_{i}, \alpha_{k})$, and $(\alpha_{j}, \alpha_{k})$ form a triangle in our graph $\mathcal{G}$, denote it by $\triangle(\alpha_{i}, \alpha_{j}, \alpha_{k})$. We say that edge $(\alpha_{i}, \alpha_{j})$ comes from a triangle $\triangle(\alpha_{i}, \alpha_{j}, \alpha_{k})$ if and only if $\alpha_{i}+\alpha_{j}+\alpha_{k}=0$ is in \eqref{eq2}. Denote this by $(\alpha_{i}, \alpha_{j})\rightarrow\triangle(\alpha_{i}, \alpha_{j}, \alpha_{k})$. Notice that if $(\alpha_{i}, \alpha_{j})\rightarrow\triangle(\alpha_{i}, \alpha_{j}, \alpha_{k})$ and $(\alpha_{i}, \alpha_{j})\rightarrow\triangle(\alpha_{i}, \alpha_{j}, \alpha_{l})$, where $k\neq l$, then $\alpha_{k}=\alpha_{l}$, which is impossible. 
%Therefore, any edge $(\alpha_{i}, \alpha_{j})$ in graph $\mathcal{G}$ comes from the only one triangle.

%First, consider the following graph (FyTAG in graph6 format):
First, consider the graph shown in Figure~\ref{fig10}.

%\begin{center}
%\includegraphics[scale=0.12]{./Images/graph00}
%\end{center}

\begin{figure}[h]
\centering
% \begin{tikzpicture}[scale=0.6]
%     \tikzset{VertexStyle/.style={
%         shape=circle,
%         draw=black,
%         %thick,
%         %fill=red!15,
%         inner sep=1.2pt,
%         minimum size=8pt,
%         %font=\sffamily
%     }}
%     \tikzset{EdgeStyle/.style={thin}}

%     % Define the central vertex
%     \Vertex[L={}, x=0.0, y=0.0]{1}

%     % Define vertices for the left triangle
%     \Vertex[L={}, x=-3.0, y=0.0]{2}
%     \Vertex[L={}, x=-1.5, y=2.6]{3}

%     % Define vertices for the right triangle
%     \Vertex[L={}, x=1.5, y=2.6]{4}
%     \Vertex[L={}, x=3.0, y=0.0]{5}

%     % Define vertices for the bottom triangle
%     \Vertex[L={}, x=1.5, y=-2.6]{6}
%     \Vertex[L={}, x=-1.5, y=-2.6]{7}

%     % Draw edges for the left triangle
%     \Edge(1)(2)
%     \Edge(1)(3)
%     \Edge(2)(3)

%     % Draw edges for the right triangle
%     \Edge(1)(4)
%     \Edge(1)(5)
%     \Edge(4)(5)

%     % Draw edges for the bottom triangle
%     \Edge(1)(6)
%     \Edge(1)(7)
%     \Edge(6)(7)

% \end{tikzpicture}

\begin{tikzpicture}[scale=0.6]
    % Updated VertexStyle to perfectly match the solid black dots in the image
    \tikzset{VertexStyle/.style={
        shape=circle,
        fill=black,
        inner sep=0pt,
        minimum size=5pt,
        text height=0pt,    % Collapses the text box vertically
        text depth=0pt
    }}
    \tikzset{EdgeStyle/.style={thin}}

    \SetVertexNoLabel

    % Define the central vertex
    \Vertex[L={}, x=0.0, y=0.0]{1}

    % Define vertices for the top-left triangle
    \Vertex[L={}, x=-3.0, y=0.0]{2}
    \Vertex[L={}, x=-1.5, y=2.6]{3}

    % Define vertices for the top-right triangle
    \Vertex[L={}, x=1.5, y=2.6]{4}
    \Vertex[L={}, x=3.0, y=0.0]{5}

    % Define vertices for the bottom triangle
    \Vertex[L={}, x=1.5, y=-2.6]{6}
    \Vertex[L={}, x=-1.5, y=-2.6]{7}

    % Draw edges for the top-left triangle
    \Edge(1)(2)
    \Edge(1)(3)
    \Edge(2)(3)

    % Draw edges for the top-right triangle
    \Edge(1)(4)
    \Edge(1)(5)
    \Edge(4)(5)

    % Draw edges for the bottom triangle
    \Edge(1)(6)
    \Edge(1)(7)
    \Edge(6)(7)

\end{tikzpicture}
\caption{Graph FyTAG in graph6 format.}
\label{fig10}
\end{figure}

Notice that the first three equations in \eqref{eq2} imply that this graph must be a subgraph of $\mathcal{G}$. 
We check with SageMath \cite{sagemath} that neither of the following $29$ graphs
\begin{equation*}
\mathcal{G}_{1}, \mathcal{G}_{2},\dots, \mathcal{G}_{19};\quad\mathcal{G}_{25}, \mathcal{G}_{26},\dots, \mathcal{G}_{33};\quad\mathcal{G}_{39}
\end{equation*}
has a subgraph isomorphic to the graph FyTAG. Hence, $\mathcal{G}$ is isomorphic to one of the graphs in the following set
\begin{equation}\label{eq11a}
\{ \mathcal{G}_{20}, \mathcal{G}_{21}, \mathcal{G}_{22}, \mathcal{G}_{23}, \mathcal{G}_{24}, \mathcal{G}_{34},\mathcal{G}_{35}, \mathcal{G}_{36},\mathcal{G}_{37}, \mathcal{G}_{38}, \mathcal{G}_{40}\}.
\end{equation}

We claim that $\mathcal{G}$ is not isomorphic to any graph in the set $\{\mathcal{G}_{34}, \mathcal{G}_{36}, \mathcal{G}_{37}, \mathcal{G}_{38}\}$. Indeed, consider the graph shown in Figure~\ref{fig11}. %following graph (F\}TJG in graph6 format):

%\begin{equation*}
%\includegraphics[scale=0.3]{./Images/graph01}
%\end{equation*}

\begin{figure}[h]
\centering
\begin{tikzpicture}[scale=0.6]
    \tikzset{VertexStyle/.style={
        shape=circle,
        draw=black,
        %thick,
        %fill=red!15,
        inner sep=1.2pt,
        minimum size=8pt,
    }}
    \tikzset{EdgeStyle/.style={thin}}

    % Define vertices with coordinates forming a regular hexagon
    \Vertex[L={$\alpha_{k_1}$}, x=0.0, y=0.0]{1}
    \Vertex[L={$\alpha_{k_2}$}, x=-3.0, y=0.0]{2}
    \Vertex[L={$\alpha_{k_3}$}, x=-1.5, y=2.6]{3}
    \Vertex[L={$\alpha_{k_4}$}, x=1.5, y=2.6]{4}
    \Vertex[L={$\alpha_{k_5}$}, x=3.0, y=0.0]{5}
    \Vertex[L={$\alpha_{k_6}$}, x=1.5, y=-2.6]{6}
    \Vertex[L={$\alpha_{k_7}$}, x=-1.5, y=-2.6]{7}

    % Draw edges connected to the central node
    \Edge(1)(2)
    \Edge(1)(3)
    \Edge(1)(4)
    \Edge(1)(5)
    \Edge(1)(6)
    \Edge(1)(7)

    % Draw the outer hexagonal cycle edges
    \Edge(2)(3)
    \Edge(3)(4)
    \Edge(4)(5)
    \Edge(5)(6)
    \Edge(6)(7)
    \Edge(7)(2)

\end{tikzpicture}
\caption{Graph F\}TJG in graph6 format.}
\label{fig11}
\end{figure}

We will prove that this graph cannot be an induced subgraph of $\mathcal{G}$. 
Assume, to the contrary, that the graph F\}TJG is an induced subgraph of $\mathcal{G}$. 
By $(ii)$ of Lemma~\ref{lemzstp}, the edge $(\alpha_{k_{1}},\alpha_{k_{2}})$ belongs to 
a zero-sum triangle $\triangle(\alpha_{k_{1}}, \alpha_{k_{2}}, \alpha')$, where $\alpha'\in N(\alpha_{k_1})$. Since $\alpha_{k_{3}}$ and $\alpha_{k_{7}}$ are the only 
vertices in $N(\alpha_{k_{1}})$ that are adjacent to $\alpha_{k_{2}}$, we obtain that 
$\alpha'=\alpha_{k_{3}}$ or $\alpha_{k_{7}}$. 
Without loss of generality, assume $\alpha'=\alpha_{k_{3}}$. 
Thus, $\triangle(\alpha_{k_{1}}, \alpha_{k_{2}}, \alpha_{k_{3}})$ is a zero-sum triangle. 
Now part $(i)$ of Lemma~\ref{lemzstp} implies that neither $\triangle(\alpha_{k_{1}}, \alpha_{k_{3}}, \alpha_{k_{4}})$ nor $\triangle(\alpha_{k_{1}}, \alpha_{k_{2}}, \alpha_{k_{7}})$ is a zero-sum triangle. 
Applying $(ii)$ of Lemma~\ref{lemzstp}, we obtain that the edges $(\alpha_{k_{3}},\alpha_{k_{4}})$ 
and $(\alpha_{k_{2}},\alpha_{k_{7}})$ belong to zero-sum triangles 
$\triangle(\alpha_{k_{3}}, \alpha_{k_{4}}, \tilde{\alpha})$ and 
$\triangle(\alpha_{k_{2}}, \alpha_{k_{7}}, \alpha'')$, respectively, and $\tilde{\alpha},\alpha''\notin N[\alpha_{k_1}]=\{\alpha_{k_1},\alpha_{k_2},\dotsc,\alpha_{k_7}\}$. 

Analogously, applying $(ii)$ of Lemma~\ref{lemzstp}, we obtain that  
$\triangle(\alpha_{k_{1}}, \alpha_{k_{4}}, \alpha_{k_{5}})$ and 
$\triangle(\alpha_{k_{1}}, \alpha_{k_{6}}, \alpha_{k_{7}})$ 
are zero-sum triangles, 
$\triangle(\alpha_{k_{1}}, \alpha_{k_{5}}, \alpha_{k_{6}})$ is not a zero-sum triangle 
and there exists a vertex $\alpha'''$ such that 
$\triangle(\alpha_{k_{5}}, \alpha_{k_{6}}, \alpha''')$ is a zero-sum triangle 
and $\alpha'''\notin N[\alpha_{k_1}]$.

Now, all obtained zero-sum triangles imply the relations
\begin{equation}\label{eq4}
\begin{split}
\alpha_{k_{1}} &+ \alpha_{k_{2}} + \alpha_{k_{3}} = 0,\\
\alpha_{k_{1}} &+ \alpha_{k_{4}} + \alpha_{k_{5}} = 0,\\
\alpha_{k_{1}} &+ \alpha_{k_{6}} + \alpha_{k_{7}} = 0,\\
\alpha_{k_{3}} &+ \alpha_{k_{4}} + \tilde{\alpha} = 0,\\
\alpha_{k_{5}} &+ \alpha_{k_{6}} + \alpha''' = 0,\\
\alpha_{k_{2}} &+ \alpha_{k_{7}} + \alpha'' = 0.
\end{split}
\end{equation}
By adding the first three relations in \eqref{eq4} and using the expressions for $\tilde{\alpha},\alpha'',$ $\alpha'''$ from the last three relations in \eqref{eq4}, we obtain
\begin{equation*}
3\alpha_{k_{1}}+(\alpha_{k_{3}}+\alpha_{k_{4}})+(\alpha_{k_{5}}+\alpha_{k_{6}})+(\alpha_{k_{2}}+ \alpha_{k_{7}})=3\alpha_{k_{1}}-\tilde{\alpha}-\alpha'''-\alpha''=0.
\end{equation*}
If $\tilde{\alpha}=\alpha''=\alpha'''$, then the last equality implies $\alpha_{k_{1}}=\tilde{\alpha}$, 
which is impossible, since $\tilde{\alpha}\notin N[\alpha_{k_1}]$. Hence, the set 
$\{\alpha_{k_1},\tilde{\alpha},\alpha'',\alpha'''\}$ contains at least three distinct numbers. 
But then the relation $3\alpha_{k_{1}}-\tilde{\alpha}-\alpha'''-\alpha''=0$ contradicts Lemma~\ref{lemma4}.
This proves that the graph F\}TJG cannot be an induced subgraph of $\mathcal{G}$.

We check with SageMath \cite{sagemath} that each graph in $\{\mathcal{G}_{34}, \mathcal{G}_{36}, \mathcal{G}_{37}, \mathcal{G}_{38}\}$ contains an induced subgraph isomorphic to F\}TJG. 
%Therefore, $\mathcal{G}\notin\{\mathcal{G}_{34}, \mathcal{G}_{36}, \mathcal{G}_{37}, \mathcal{G}_{38}\}$.
%Therefore, $\mathcal{G}$ is not isomorphic to any graph in the set $\{\mathcal{G}_{34}, \mathcal{G}_{36}, \mathcal{G}_{37}, \mathcal{G}_{38}\}$.
Hence, $\mathcal{G}$ is not isomorphic to either of these graphs.
This together with \eqref{eq11a} imply that $\mathcal{G}$ must be isomorphic to some graph in the following set
\begin{equation*}
\{\mathcal{G}_{20}, \mathcal{G}_{21}, \mathcal{G}_{22}, \mathcal{G}_{23}, \mathcal{G}_{24}, \mathcal{G}_{35}, \mathcal{G}_{40}\}.
\end{equation*}
%\begin{equation*}
%\mathcal{G}\in\{ \mathcal{G}_{20}, \mathcal{G}_{21}, \mathcal{G}_{22}, \mathcal{G}_{23}, \mathcal{G}_{24}, \mathcal{G}_{35}, \mathcal{G}_{40}\}.
%\end{equation*}
%We will eliminate all these graphs one by one. 

%Next, we will prove that $\mathcal{G}\notin\{ \mathcal{G}_{20}, \mathcal{G}_{21}, \mathcal{G}_{22}, \mathcal{G}_{23}, \mathcal{G}_{24}\}$.
Next, we will prove that $\mathcal{G}$ is not isomorphic to either of the first five graphs in the set above, namely, $\{\mathcal{G}_{20}, \mathcal{G}_{21}, \mathcal{G}_{22}, \mathcal{G}_{23}, \mathcal{G}_{24}\}$.

The 16 vertices of each graph $\mathcal{G}_i$, $i\in\{20,21,22,23,24\}$, are labeled with numbers $1,2,\dotsc,16$.  
We will consider every such graph $\mathcal{G}_i$ separately. 
Assuming that $\mathcal{G}$ is isomorphic to $\mathcal{G}_i$, we will show that it leads to a contradiction. 
We will identify each vertex $v\in \{1,2,\dotsc,16\}$ of $\mathcal{G}_i$ with $\alpha_v$. 
So that instead of treating 
$\mathcal{G}$ and $\mathcal{G}_i$ as isomorphic graphs, we will treat them as coinciding graphs, 
i.e., $\mathcal{G}=\mathcal{G}_i$.

Assume that $\mathcal{G}=\mathcal{G}_{20}$. The induced subgraphs $\mathcal{G}_{20}(\alpha_1)$ and 
$\mathcal{G}_{20}(\alpha_6)$ are shown in Figure~\ref{fig3}. 
%\begin{figure}[h]
%\centering
%\includegraphics[scale=0.5]{./Images/G20i1}\;\;\;
%\includegraphics[scale=0.5]{./Images/G20i6}
%\caption{Induced subgraphs $\mathcal{G}_{20}(\alpha_1)$ and $\mathcal{G}_{20}(\alpha_6)$.}
%\label{fig3}
%\end{figure}

\begin{figure}[h]
\centering
\begin{tikzpicture}[scale=0.6]
    % Set the global style for the vertices
    \tikzset{VertexStyle/.style={
        shape=circle,
        draw=black,
        %thick,
        %fill=red!15,
        inner sep=1.2pt,
        minimum size=8pt,
    }}
    
    % Edge style configured to 'thin'
    \tikzset{EdgeStyle/.style={thin}}

    % Define vertices with explicit coordinates matching the visual layout
    \Vertex[x=0.0, y=0.0]{1}
    \Vertex[x=-1.8, y=1.6]{2}
    \Vertex[x=3.2, y=1.0]{3}
    \Vertex[x=1.5, y=-2.8]{4}
    \Vertex[x=-0.4, y=-2.5]{5}
    \Vertex[x=1.0, y=1.8]{6}
    \Vertex[x=1.6, y=-0.3]{7}

    % Draw edges connected to the central node 1
    \Edge(1)(2)
    \Edge(1)(3)
    \Edge(1)(4)
    \Edge(1)(5)
    \Edge(1)(6)
    \Edge(1)(7)

    % Draw remaining outer edges
    \Edge(2)(6)
    \Edge(3)(7)
    \Edge(4)(5)
    \Edge(4)(7)
    \Edge(5)(7)
    \Edge(6)(7)

\end{tikzpicture}\qquad\quad
\begin{tikzpicture}[scale=0.6]
    \tikzset{VertexStyle/.style={
        shape=circle,
        draw=black,
        %thick,
        %fill=red!15,
        inner sep=1.2pt,
        minimum size=8pt,
    }}
    \tikzset{EdgeStyle/.style={thin}}

    % Define vertices with explicit coordinates matching the visual layout
    \Vertex[x=0.0, y=0.0]{6}
    \Vertex[x=-0.4, y=2.0]{2}
    \Vertex[x=-2.8, y=0.0]{1}
    \Vertex[x=-1.2, y=-2.0]{7}
    \Vertex[x=-2.6, y=2.5]{12}
    \Vertex[x=2.0, y=3.2]{14}
    \Vertex[x=3.0, y=1.0]{16}

    % Draw edges connected to the central node 6
    \Edge(6)(1)
    \Edge(6)(2)
    \Edge(6)(7)
    \Edge(6)(12)
    \Edge(6)(14)
    \Edge(6)(16)

    % Draw edges connected to node 2
    \Edge(2)(1)
    \Edge(2)(12)
    \Edge(2)(14)
    \Edge(2)(16)

    % Draw remaining outer edges
    \Edge(1)(7)
    \Edge(14)(16)

\end{tikzpicture}
\caption{Induced subgraphs $\mathcal{G}_{20}(\alpha_1)$ and $\mathcal{G}_{20}(\alpha_6)$.}
\label{fig3}
\end{figure}

We have that $\alpha_2,\alpha_6\in N(\alpha_1)$, 
$\deg_{\mathcal{G}_{20}(\alpha_1)}(\alpha_2)=2$ and $(\alpha_2,\alpha_6)$ is an edge in $\mathcal{G}_{20}$. 
Hence, part $(iii)$ of Lemma~\ref{lemzstp} implies that $\triangle(\alpha_1,\alpha_2,$ $\alpha_6)$ is a zero-sum triangle. 
Similarly, $\alpha_1,\alpha_7\in N(\alpha_6)$, 
$\deg_{\mathcal{G}_{20}(\alpha_6)}(\alpha_7)=2$ and $(\alpha_1,\alpha_7)$ is an edge in $\mathcal{G}_{20}$.
Again, applying part $(iii)$ of Lemma~\ref{lemzstp}, we obtain that  $\triangle(\alpha_1,\alpha_6,\alpha_7)$ is a zero-sum triangle. So that zero-sum triangles $\triangle(\alpha_1,\alpha_2,\alpha_6)$ and $\triangle(\alpha_1,\alpha_6,\alpha_7)$ 
share two distinct vertices $\alpha_1$ and $\alpha_6$. This contradicts part $(i)$ of Lemma~\ref{lemzstp}. 
Thus, $\mathcal{G}\neq \mathcal{G}_{20}$. 

Analogously, for every $i\in\{21,22,23,24\}$, considering the induced subgraphs $\mathcal{G}_{i}(\alpha_1)$ and $\mathcal{G}_{i}(\alpha_6)$, which are given in Figures~\ref{fig4}-\ref{fig7}, and applying Lemma~\ref{lemzstp}, we obtain that $\mathcal{G}\neq \mathcal{G}_i$. 
%$\mathcal{G}\notin\{\mathcal{G}_{21}, \mathcal{G}_{22}, \mathcal{G}_{23}, \mathcal{G}_{24}\}$.
%\begin{figure}[h]
%\centering
%\includegraphics[scale=0.5]{./Images/G21i1}\;\;\;
%\includegraphics[scale=0.5]{./Images/G21i6}
%\caption{Induced subgraphs $\mathcal{G}_{21}(\alpha_1)$ and $\mathcal{G}_{21}(\alpha_6)$.}
%\label{fig4}
%\end{figure}

\begin{figure}[h]
\centering
\begin{tikzpicture}[scale=0.6]
    % Set the global style for the vertices to match the pale pink aesthetic
    \tikzset{VertexStyle/.style={
        shape=circle,
        draw=black,
        %thick,
        %fill=red!15,
        inner sep=1.2pt,
        minimum size=8pt,
    }}
    
    % Ensure edges are drawn thick to match the image
    \tikzset{EdgeStyle/.style={thin}}

    % Define vertices with explicit coordinates
    \Vertex[x=1.5, y=1.2]{1}
    \Vertex[x=2.5, y=2.5]{2}
    \Vertex[x=-2.0, y=1.0]{3}
    \Vertex[x=3.5, y=-0.6]{4}
    \Vertex[x=1.7, y=-2.2]{5}
    \Vertex[x=0.0, y=2.5]{6}
    \Vertex[x=0.0, y=0.0]{7}

    % Draw the edges
    % tkz-graph automatically handles drawing edges behind the nodes
    
    % Edges connected to central node 1
    \Edge(1)(2)
    \Edge(1)(3)
    \Edge(1)(4)
    \Edge(1)(5)
    \Edge(1)(6)
    \Edge(1)(7)

    % Remaining edges
    \Edge(2)(6)
    \Edge(3)(7)
    \Edge(6)(7)
    \Edge(7)(4)
    \Edge(7)(5)
    \Edge(4)(5)

\end{tikzpicture}\qquad
\begin{tikzpicture}[scale=0.6]
    % Set the global style for the vertices
    \tikzset{VertexStyle/.style={
        shape=circle,
        draw=black,
        %thick,
        %fill=red!15,
        inner sep=1.2pt,
        minimum size=8pt,
    }}
    
    % Edge style (change 'thick' to 'thin' if you want narrower lines)
    \tikzset{EdgeStyle/.style={thin}}

    % Define vertices with explicit coordinates to match the image layout
    \Vertex[x=0.0, y=0.0]{6}
    \Vertex[x=1.6, y=-2.5]{2}
    \Vertex[x=1.0, y=2.5]{13}
    \Vertex[x=3.8, y=1.4]{16}
    \Vertex[x=3.8, y=-1.1]{14}
    \Vertex[x=-1.0, y=-2.4]{1}
    \Vertex[x=-2.0, y=0.5]{7}

    % Draw edges connected to node 6
    \Edge(6)(1)
    \Edge(6)(2)
    \Edge(6)(7)
    \Edge(6)(13)
    \Edge(6)(14)
    \Edge(6)(16)

    % Draw remaining outer edges
    \Edge(7)(1)
    \Edge(1)(2)
    \Edge(13)(2)
    \Edge(2)(16)
    \Edge(2)(14)
    \Edge(16)(14)

\end{tikzpicture}

\caption{Induced subgraphs $\mathcal{G}_{21}(\alpha_1)$ and $\mathcal{G}_{21}(\alpha_6)$.}
\label{fig4}
\end{figure}

%\begin{figure}[h]
%\centering
%\includegraphics[scale=0.5]{./Images/G22i1}\;\;\;
%\includegraphics[scale=0.5]{./Images/G22i6}
%\caption{Induced subgraphs $\mathcal{G}_{22}(\alpha_1)$ and $\mathcal{G}_{22}(\alpha_6)$.}
%\label{fig5}
%\end{figure}

\begin{figure}[h] %G22
\centering
\begin{tikzpicture}[scale=0.6]
    \tikzset{VertexStyle/.style={
        shape=circle,
        draw=black,
        %thick,
        %fill=red!15,
        inner sep=1.2pt,
        minimum size=8pt,
    }}
    \tikzset{EdgeStyle/.style={thin}}

    % Define vertices with coordinates matching the visual layout
    \Vertex[x=0.0, y=0.0]{1}
    \Vertex[x=2.8, y=0.7]{2}
    \Vertex[x=2.0, y=-1.6]{3}
    \Vertex[x=-3.0, y=1.1]{4}
    \Vertex[x=-1.1, y=2.8]{5}
    \Vertex[x=1.1, y=2.3]{6}
    \Vertex[x=-2.6, y=-1.0]{7}

    % Draw edges connected to the central node 1
    \Edge(1)(2)
    \Edge(1)(3)
    \Edge(1)(4)
    \Edge(1)(5)
    \Edge(1)(6)
    \Edge(1)(7)

    % Draw remaining outer edges
    \Edge(2)(6)
    \Edge(3)(7)
    \Edge(4)(5)
    \Edge(4)(7)
    \Edge(5)(6)
    \Edge(6)(7)

\end{tikzpicture}\qquad
\begin{tikzpicture}[scale=0.6]
    \tikzset{VertexStyle/.style={
        shape=circle,
        draw=black,
        %thick,
        %fill=red!15,
        inner sep=1.2pt,
        minimum size=8pt,
    }}
    \tikzset{EdgeStyle/.style={thin}}

    % Define vertices with explicit coordinates matching the visual layout
    \Vertex[x=0.0, y=0.0]{6}
    \Vertex[x=-2.5, y=-0.5]{1}
    \Vertex[x=-2.0, y=1.5]{2}
    \Vertex[x=0.4, y=2.0]{11}
    \Vertex[x=3.0, y=-2.5]{7}
    \Vertex[x=0.0, y=-2.9]{15}
    \Vertex[x=-2.6, y=-3.0]{5}

    % Draw edges connected to the central node 6
    \Edge(6)(1)
    \Edge(6)(2)
    \Edge(6)(5)
    \Edge(6)(7)
    \Edge(6)(11)
    \Edge(6)(15)

    % Draw edges connected to node 1
    \Edge(1)(2)
    \Edge(1)(5)
    \Edge(1)(7)

    % Draw remaining outer edges
    \Edge(2)(11)
    \Edge(2)(15)
    \Edge(5)(15)

\end{tikzpicture}
\caption{Induced subgraphs $\mathcal{G}_{22}(\alpha_1)$ and $\mathcal{G}_{22}(\alpha_6)$.}
\label{fig5}
\end{figure}

%\begin{figure}[h]
%\centering
%\includegraphics[scale=0.5]{./Images/G23i1}\;\;\;
%\includegraphics[scale=0.5]{./Images/G23i6}
%\caption{Induced subgraphs $\mathcal{G}_{23}(\alpha_1)$ and $\mathcal{G}_{23}(\alpha_6)$.}
%\label{fig6}
%\end{figure}

\begin{figure}[h] %G23

\begin{tikzpicture}[scale=0.6]
    \tikzset{VertexStyle/.style={
        shape=circle,
        draw=black,
        %thick,
        %fill=red!15,
        inner sep=1.2pt,
        minimum size=8pt,
    }}
    \tikzset{EdgeStyle/.style={thin}}

    % Define vertices with explicit coordinates matching the visual layout
    \Vertex[x=0.0, y=0.0]{1}
    \Vertex[x=-2.7, y=2.0]{2}
    \Vertex[x=-1.2, y=-3.0]{3}
    \Vertex[x=4.0, y=-0.8]{4}
    \Vertex[x=0.0, y=2.7]{5}
    \Vertex[x=1.8, y=-2.4]{6}
    \Vertex[x=2.4, y=1.2]{7}

    % Draw edges connected to the central node 1
    \Edge(1)(2)
    \Edge(1)(3)
    \Edge(1)(4)
    \Edge(1)(5)
    \Edge(1)(6)
    \Edge(1)(7)

    % Draw edges connected to node 7 (excluding node 1)
    \Edge(7)(4)
    \Edge(7)(5)
    \Edge(7)(6)

    % Draw remaining outer edges
    \Edge(2)(5)
    \Edge(3)(6)
    \Edge(4)(6)

\end{tikzpicture}\qquad
\begin{tikzpicture}[scale=0.6]
    \tikzset{VertexStyle/.style={
        shape=circle,
        draw=black,
        %thick,
        %fill=red!15,
        inner sep=1.2pt,
        minimum size=8pt,
    }}
    \tikzset{EdgeStyle/.style={thin}}

    % Define vertices with coordinates matching the visual layout
    \Vertex[x=0.0, y=0.0]{6}
    \Vertex[x=1.2, y=2.6]{16}
    \Vertex[x=3.2, y=1.5]{4}
    \Vertex[x=4.8, y=-0.8]{7}
    \Vertex[x=2.4, y=-2.3]{1}
    \Vertex[x=-1.3, y=-2.8]{3}
    \Vertex[x=-2.7, y=-1.0]{12}

    % Draw edges connected to the central node 6
    \Edge(6)(1)
    \Edge(6)(3)
    \Edge(6)(4)
    \Edge(6)(7)
    \Edge(6)(12)
    \Edge(6)(16)

    % Draw remaining outer edges
    \Edge(1)(3)
    \Edge(1)(4)
    \Edge(1)(7)
    \Edge(3)(12)
    \Edge(4)(7)
    \Edge(4)(16)

\end{tikzpicture}
\caption{Induced subgraphs $\mathcal{G}_{23}(\alpha_1)$ and $\mathcal{G}_{23}(\alpha_6)$.}
\label{fig6}
\end{figure}

%\begin{figure}[h]
%\centering
%\includegraphics[scale=0.5]{./Images/G24i1}\;\;\;
%\includegraphics[scale=0.5]{./Images/G24i6}
%\caption{Induced subgraphs $\mathcal{G}_{24}(\alpha_1)$ and $\mathcal{G}_{24}(\alpha_6)$.}
%\label{fig7}
%\end{figure}

\begin{figure}[h] %G24
\centering
\begin{tikzpicture}[scale=0.6]
    \tikzset{VertexStyle/.style={
        shape=circle,
        draw=black,
        %thick,
        %fill=red!15,
        inner sep=1.2pt,
        minimum size=8pt,
    }}
    \tikzset{EdgeStyle/.style={thin}}

    % Define vertices with explicit coordinates matching the visual layout
    \Vertex[x=0.0, y=0.0]{1}
    \Vertex[x=1.0, y=-2.8]{2}
    \Vertex[x=2.6, y=2.5]{3}
    \Vertex[x=-2.6, y=3.5]{4}
    \Vertex[x=-2.0, y=-2.0]{5}
    \Vertex[x=0.0, y=3.0]{6}
    \Vertex[x=-2.6, y=0.8]{7}

    % Draw edges connected to the central node 1
    \Edge(1)(2)
    \Edge(1)(3)
    \Edge(1)(4)
    \Edge(1)(5)
    \Edge(1)(6)
    \Edge(1)(7)

    % Draw remaining outer edges
    \Edge(2)(5)
    \Edge(3)(6)
    \Edge(4)(6)
    \Edge(4)(7)
    \Edge(5)(7)
    \Edge(6)(7)

\end{tikzpicture}\qquad
\begin{tikzpicture}[scale=0.6]
    \tikzset{VertexStyle/.style={
        shape=circle,
        draw=black,
        %thick,
        %fill=red!15,
        inner sep=1.2pt,
        minimum size=8pt,
    }}
    \tikzset{EdgeStyle/.style={thin}}

    % Define vertices with coordinates matching the visual layout
    \Vertex[x=0.0, y=0.0]{6}
    \Vertex[x=-1.5, y=2.5]{16}
    \Vertex[x=-2.5, y=0.1]{4}
    \Vertex[x=-3.2, y=-3.2]{7}
    \Vertex[x=-0.8, y=-3.6]{1}
    \Vertex[x=2.0, y=-2.6]{3}
    \Vertex[x=3.0, y=1.0]{12}

    % Draw edges connected to the central node 6
    \Edge(6)(1)
    \Edge(6)(3)
    \Edge(6)(4)
    \Edge(6)(7)
    \Edge(6)(12)
    \Edge(6)(16)

    % Draw edges connected to node 4
    \Edge(4)(1)
    \Edge(4)(7)
    \Edge(4)(16)

    % Draw remaining outer edges
    \Edge(1)(3)
    \Edge(1)(7)
    \Edge(3)(12)

\end{tikzpicture}
\caption{Induced subgraphs $\mathcal{G}_{24}(\alpha_1)$ and $\mathcal{G}_{24}(\alpha_6)$.}
\label{fig7}
\end{figure}

%Hence, we are left with two cases: $\mathcal{G}=\mathcal{G}_{35}$ or  %$\mathcal{G}=\mathcal{G}_{40}$. 

\newpage

Hence, we are left with two cases: $\mathcal{G}=\mathcal{G}_{35}$ or  $\mathcal{G}=\mathcal{G}_{40}$. To eliminate the first case, consider the graph shown in Figure~\ref{fig8}.

%Consider the following graph (Fummw in graph6 format):
%Consider the graph shown in Figure~\ref{fig8}.
%\begin{equation*}
%\includegraphics[scale=0.22]{./Images/graph02}
%\end{equation*}

\begin{figure}[h] %G24
\centering
\begin{tikzpicture}[scale=0.6]
    \tikzset{VertexStyle/.style={
        shape=circle,
        draw=black,
        %thick,
        %fill=red!15,
        inner sep=1.2pt,
        minimum size=8pt,
    }}
    \tikzset{EdgeStyle/.style={thin}}

    % Define vertices with coordinates and mathematical labels
    \Vertex[L={$\alpha_{k_1}$}, x=0.0, y=0.0]{1}
    \Vertex[L={$\alpha_{k_2}$}, x=-3.0, y=-2.0]{2}
    \Vertex[L={$\alpha_{k_3}$}, x=-4.0, y=1.5]{3}
    \Vertex[L={$\alpha_{k_4}$}, x=-1.5, y=4.0]{4}
    \Vertex[L={$\alpha_{k_5}$}, x=1.5, y=4.0]{5}
    \Vertex[L={$\alpha_{k_6}$}, x=4.0, y=1.5]{6}
    \Vertex[L={$\alpha_{k_7}$}, x=3.0, y=-2.0]{7}

    % Draw edges connected to the central/hub node k_1
    \Edge(1)(2)
    \Edge(1)(3)
    \Edge(1)(4)
    \Edge(1)(5)
    \Edge(1)(6)
    \Edge(1)(7)

    % Draw the remaining outer edges and crossing segments
    \Edge(2)(3)
    \Edge(2)(4)
    \Edge(3)(4)
    \Edge(3)(5)
    \Edge(4)(5)
    \Edge(4)(6)
    \Edge(5)(6)
    \Edge(5)(7)
    \Edge(6)(7)

\end{tikzpicture}
\caption{Graph Fummw in graph6 format.}
\label{fig8}
\end{figure}

%This graph is an induced subgraph of $\mathcal{G}_{35}$. 
We will prove that this graph cannot be an induced subgraph of $\mathcal{G}$. Assume, on the contrary, that it is. 
By $(ii)$ of Lemma~\ref{lemzstp}, the edge $(\alpha_{k_{1}},\alpha_{k_{2}})$ belongs to 
a zero-sum triangle $\triangle(\alpha_{k_{1}}, \alpha_{k_{2}}, \alpha')$, where $\alpha'\in N(\alpha_{k_1})$. Since $\alpha_{k_{3}}$ and $\alpha_{k_{4}}$ are the only 
vertices in $N(\alpha_{k_{1}})$ that are adjacent to $\alpha_{k_{2}}$, we obtain that 
$\alpha'=\alpha_{k_{3}}$ or $\alpha_{k_{4}}$. Thus, we have to consider two cases: $(i)$ $\triangle(\alpha_{k_{1}}, \alpha_{k_{2}}, \alpha_{k_3})$ is a zero-sum triangle; $(ii)$ $\triangle(\alpha_{k_{1}}, \alpha_{k_{2}}, \alpha_{k_4})$ is a zero-sum triangle.

\underline{Case $(i)$: $\triangle(\alpha_{k_{1}}, \alpha_{k_{2}}, \alpha_{k_3})$ is a zero-sum triangle.}  
By part $(ii)$ of Lemma~\ref{lemzstp}, the edge $(\alpha_{k_{1}},\alpha_{k_{4}})$ belongs to 
a zero-sum triangle $\triangle(\alpha_{k_{1}}, \alpha_{k_{4}}, \alpha'')$, where $\alpha''\in N(\alpha_{k_1})$. 
We have that $\alpha''\in \{\alpha_{k_2},\alpha_{k_3},\alpha_{k_5},\alpha_{k_6}\}$, since $\alpha_{k_4}$ is not adjacent to $\alpha_{k_7}$. Part $(i)$ of Lemma~\ref{lemzstp} implies that neither $\triangle(\alpha_{k_{1}}, \alpha_{k_{4}}, \alpha_{k_{2}})$  
nor $\triangle(\alpha_{k_{1}}, \alpha_{k_{4}},$ $\alpha_{k_{3}})$ is a zero-sum triangle. 
Hence, $\alpha''\in\{\alpha_{k_{5}},\alpha_{k_{6}}\}$.

Assume that $\alpha''=\alpha_{k_{5}}$. Then we have zero-sum triangles  
$\triangle(\alpha_{k_{1}}, \alpha_{k_{4}}, \alpha_{k_{5}})$,  
$\triangle(\alpha_{k_{1}}, \alpha_{k_{2}}, \alpha_{k_3})$, and 
$(\alpha_{k_{2}},\alpha_{k_{4}})$ together with $(\alpha_{k_{3}},\alpha_{k_{5}})$ are 
edges in $\mathcal{G}$. This contradicts part $(iv)$ of Lemma~\ref{lemzstp}.

Assume that $\alpha''=\alpha_{k_{6}}$, so that $\triangle(\alpha_{k_{1}}, \alpha_{k_{4}}, \alpha_{k_{6}})$ is a zero-sum triangle. 
Part $(i)$ of Lemma~\ref{lemzstp} implies that $\triangle(\alpha_{k_{1}}, \alpha_{k_{6}}, \alpha_{k_{7}})$ 
is not a zero-sum triangle, since it shares an edge with the zero-sum triangle $\triangle(\alpha_{k_{1}}, \alpha_{k_{4}}, \alpha_{k_{6}})$. 
Hence, $\triangle(\alpha_{k_{1}}, \alpha_{k_{5}}, \alpha_{k_{7}})$ is a zero-sum triangle. 
Now, we have two zero-sum triangles $\triangle(\alpha_{k_{1}}, \alpha_{k_{4}}, \alpha_{k_{6}})$ and  $\triangle(\alpha_{k_{1}}, \alpha_{k_{5}}, \alpha_{k_{7}})$, and $(\alpha_{k_{4}},\alpha_{k_{5}})$ together with $(\alpha_{k_{6}},\alpha_{k_{7}})$ are 
edges in $\mathcal{G}$. This contradicts part $(iv)$ of Lemma~\ref{lemzstp}.

% Similarly to the previous case $\alpha''=\alpha_{k_{5}}$ we obtain zero-sum triangles  
% $\triangle(\alpha_{k_{4}}, \alpha_{k_{5}}, \tilde{\alpha})$ and 
% $\triangle(\alpha_{k_{6}}, \alpha_{k_{7}}, \alpha''')$ such that 
% $\tilde{\alpha},\alpha'''\notin N[\alpha_{k_{1}}]$. All obtained zero-sum triangles imply the relations
%  \begin{equation}\label{eq6}
% \begin{split}
% %\alpha_{k_{1}} &+ \alpha_{k_{2}} + \alpha_{k_{3}} = 0,\\
% \alpha_{k_{1}} &+ \alpha_{k_{4}} + \alpha_{k_{6}} = 0,\\
% \alpha_{k_{1}} &+ \alpha_{k_{5}} + \alpha_{k_{7}} = 0,\\
% \alpha_{k_{4}} &+ \alpha_{k_{5}} + \tilde{\alpha} = 0,\\
% \alpha_{k_{6}} &+ \alpha_{k_{7}} + \alpha''' = 0.
% \end{split}
% \end{equation}
% By adding the first two relations in \eqref{eq6} and using the expressions for $\tilde{\alpha}$ and $\alpha'''$ 
% from the last two relations in \eqref{eq6}, we obtain
% \begin{equation*}
% 2\alpha_{k_{1}}+ (\alpha_{k_{4}}+\alpha_{k_{5}})+(\alpha_{k_{6}}+\alpha_{k_{7}})=2\alpha_{k_{1}}-\tilde{\alpha}-\alpha'''=0.
% \end{equation*}
% If $\tilde{\alpha}=\alpha'''$, then the last equality implies $\alpha_{k_{1}}=\tilde{\alpha}$, 
% which is impossible, since $\tilde{\alpha}\notin N[\alpha_{k_1}]$. Hence, the set 
% $\{\alpha_{k_1},\tilde{\alpha},\alpha'''\}$ contains at least two distinct numbers. 
% But then the relation $2\alpha_{k_{1}}-\tilde{\alpha}-\alpha'''=0$ contradicts Lemma~\ref{intro7}.
\vspace{0,3cm}

\underline{Case $(ii)$: $\triangle(\alpha_{k_{1}}, \alpha_{k_{2}}, \alpha_{k_4})$ is a zero-sum triangle.} 
Part $(i)$ of Lemma~\ref{lemzstp} implies that neither $\triangle(\alpha_{k_{1}}, \alpha_{k_{3}}, \alpha_{k_{2}})$ 
nor $\triangle(\alpha_{k_{1}}, \alpha_{k_{3}}, \alpha_{k_{4}})$
is a zero-sum triangle, since they share an edge with the zero-sum triangle 
$\triangle(\alpha_{k_{1}}, \alpha_{k_{2}}, \alpha_{k_{4}})$. 
Hence, $\triangle(\alpha_{k_{1}}, \alpha_{k_{3}}, \alpha_{k_{5}})$ is a zero-sum triangle. 
Now, we have two zero-sum triangles $\triangle(\alpha_{k_{1}}, \alpha_{k_{2}}, \alpha_{k_{4}})$ and  
$\triangle(\alpha_{k_{1}}, \alpha_{k_{3}}, \alpha_{k_{5}})$, and $(\alpha_{k_{2}},\alpha_{k_{3}})$ together with $(\alpha_{k_{4}},\alpha_{k_{5}})$ are 
edges in $\mathcal{G}$. This contradicts part $(iv)$ of Lemma~\ref{lemzstp}.

This completes the proof that the graph Fummw cannot be an induced subgraph of $\mathcal{G}$. 
We check with SageMath \cite{sagemath} that the graph $\mathcal{G}_{35}$ contains an induced subgraph isomorphic to Fummw. 
Therefore $\mathcal{G}\neq \mathcal{G}_{35}$ and we are left to consider the last option $\mathcal{G}= \mathcal{G}_{40}$. To do so, consider the graph shown in Figure~\ref{fig9}.
%\vspace{0,3cm}

%Now, consider the following graph (G\textasciitilde{}z$\setminus$z\{ in graph6 format):
%Now, consider the graph shown in Figure~\ref{fig9}.
%\begin{equation*}
%\includegraphics[scale=0.3]{./Images/graph03}
%\end{equation*}

\begin{figure}[h] %G24
\centering
\begin{tikzpicture}[scale=0.6]
    % Define the style for the solid black dots
    \tikzset{VertexStyle/.style={
        shape=circle,
        fill=black,
        inner sep=1.2pt, 
        minimum size=5pt
    }}
    \tikzset{EdgeStyle/.style={thin}}

    % Position the mathematical labels outside the vertices
    \SetVertexLabelOut

    % Define vertices with coordinates forming a regular octagon
    % Lpos controls the precise angle of the external label
    \Vertex[L={$\alpha_{k_1}$}, x=0.0, y=3.5, Lpos=90]{1}
    \Vertex[L={$\alpha_{k_2}$}, x=-2.5, y=2.5, Lpos=135]{2}
    \Vertex[L={$\alpha_{k_3}$}, x=2.5, y=2.5, Lpos=45]{3}
    \Vertex[L={$\alpha_{k_4}$}, x=-3.5, y=0.0, Lpos=180]{4}
    \Vertex[L={$\alpha_{k_5}$}, x=3.5, y=0.0, Lpos=0]{5}
    \Vertex[L={$\alpha_{k_6}$}, x=-2.5, y=-2.5, Lpos=-135]{6}
    \Vertex[L={$\alpha_{k_7}$}, x=2.5, y=-2.5, Lpos=-45]{7}
    \Vertex[L={$\alpha_{k_8}$}, x=0.0, y=-3.5, Lpos=-90]{8}

    % Draw edges from k_1
    \Edge(1)(2) \Edge(1)(3) \Edge(1)(4) \Edge(1)(5) \Edge(1)(6) \Edge(1)(7)
    
    % Draw edges from k_2 (excluding 1)
    \Edge(2)(3) \Edge(2)(4) \Edge(2)(5) \Edge(2)(6) \Edge(2)(8)
    
    % Draw edges from k_3 (excluding 1, 2)
    \Edge(3)(4) \Edge(3)(5) \Edge(3)(7) \Edge(3)(8)
    
    % Draw edges from k_4 (excluding 1, 2, 3)
    \Edge(4)(6) \Edge(4)(7) \Edge(4)(8)
    
    % Draw edges from k_5 (excluding 1, 2, 3, 4)
    \Edge(5)(6) \Edge(5)(7) \Edge(5)(8)
    
    % Draw edges from k_6 (excluding 1 to 5)
    \Edge(6)(7) \Edge(6)(8)
    
    % Draw edges from k_7 (excluding 1 to 6)
    \Edge(7)(8)

\end{tikzpicture}
\caption{Graph G\textasciitilde{}z$\setminus$z\{ in graph6 format.}
\label{fig9}
\end{figure}

%This graph is an induced subgraph of $\mathcal{G}_{40}$. 
We will prove that this graph cannot be an induced subgraph of $\mathcal{G}$. Assume, to the contrary, that it is. 
Part $(iii)$ of Lemma~\ref{lemrc} implies that there exist three distinct relations 
$\alpha_{k_{1}} + \alpha'_i + \alpha''_i = 0$, $i\in\{1,2,3\}$, which correspond to three distinct zero-sum 
triangles $\triangle(\alpha_{k_{1}}, \alpha'_i, \alpha''_i)$. Then part $(ii)$ of Lemma~\ref{lemzstp} yields 
$\alpha'_i, \alpha''_i\in N(\alpha_{k_{1}})$ for every $i\in\{1,2,3\}$. 
Moreover, applying part $(i)$ of Lemma~\ref{lemzstp}, we obtain that any two of the three sets $\{\alpha'_i, \alpha''_i\}$,  
$i\in\{1,2,3\}$, are disjoint. Hence, 
\[
\{\alpha'_1, \alpha''_1,\alpha'_2, \alpha''_2,\alpha'_3, \alpha''_3\} = N(\alpha_{k_{1}}) = 
\{\alpha_{k_{2}},\alpha_{k_{3}},\alpha_{k_{4}},\alpha_{k_{5}},\alpha_{k_{6}},\alpha_{k_{7}}\}.
\]
Now, adding up the relations $\alpha_{k_{1}} + \alpha'_i + \alpha''_i = 0$, $i\in\{1,2,3\}$, we obtain
\begin{equation}\label{eq8}
3\alpha_{k_{1}}+\alpha_{k_{2}}+\alpha_{k_{3}}+\alpha_{k_{4}}+\alpha_{k_{5}}+\alpha_{k_{6}}+\alpha_{k_{7}}=0.
\end{equation}
Analogously, considering the three relations $\alpha_{k_{8}} + \tilde{\alpha}'_i + \tilde{\alpha}''_i = 0$, $i\in\{1,2,3\}$, 
we obtain
\begin{equation*}
3\alpha_{k_{8}}+\alpha_{k_{2}}+\alpha_{k_{3}}+\alpha_{k_{4}}+\alpha_{k_{5}}+\alpha_{k_{6}}+\alpha_{k_{7}}=0.
\end{equation*}
This equality, together with \eqref{eq8}, imply that $\alpha_{k_{1}}=\alpha_{k_{8}}$, which is a contradiction. 
Hence, the graph G\textasciitilde{}z$\setminus$z\{ cannot be an induced subgraph of $\mathcal{G}$. 
We check with SageMath \cite{sagemath} that the graph $\mathcal{G}_{40}$ contains an induced subgraph isomorphic to G\textasciitilde{}z$\setminus$z\{. 
Therefore $\mathcal{G}\neq \mathcal{G}_{40}$.

Finally, we have proved that $\mathcal{G}\neq\mathcal{G}_{i}$, for $i=1,2,\dots 40$. 
Thus, no algebraic number of degree $d=16$ satisfies the relation \eqref{eq1}.
\end{proof}

\printbibliography

\appendix
\section{SageMath code}

\begin{lstlisting}[language=Python,basicstyle=\ttfamily\scriptsize,caption=SageMath code used in the proof of Theorem~\ref{t1}]

# Load the codes of all 40 vertex-transitive graphs on 16 vertices of degree 6. These graphs are contained in the file 'alltrans16_k=06.txt' which is available at https://zenodo.org/records/4010122
Graph_strings = []
with open('alltrans16_k=06.txt', 'r') as file:
    for line in file:
        Graph_strings.append(line.strip())

# Determine all graphs in Graph_strings that have a subgraph isomorphic to FyTAG (code in graph6 format). 
FyTAG = Graph({1:[2,3,4,5,6,7],2:[3],3:[],4:[5],5:[],6:[7],7:[]})
Graph_FyTAG =[]
for i in range(len(Graph_strings)):
    G = Graph(Graph_strings[i])
    if G.subgraph_search(FyTAG, induced=False) is not None:
        Graph_FyTAG.append(G)

# Determine all graphs in Graph_FyTAG that don't have an induced subgraph isomorphic to FTJG (code in graph6 format). 
FTJG = Graph({1:[2,3,4,5,6,7],2:[3],3:[4],4:[5],5:[6],6:[7],7:[2]})
Graph_FTJG =[]
for G in Graph_FyTAG:
    if G.subgraph_search(FTJG, induced=True) is None:
        Graph_FTJG.append(G)

# Let G be a graph and v -- one of its vertices. Then the induced graph G(v) can be obtained by calling G.subgraph([v]+G.neighbors(v)).
# Example with G=Graph_FTJG nad v=1:
Graph_FTJG[2].subgraph([1]+Graph_FTJG[2].neighbors(1))
\end{lstlisting}

\end{document}